\newcounter{thm}
\newtheorem{therm}[thm]{Theorem}
\newtheorem{lem}[thm]{Lemma}
\newtheorem{cor}[thm]{Corollary}
\title{Interlacing Properties of Eigenvalues of Laplacian and Net-Laplacian Matrix of Signed Graphs}
\author[1]{Satyam Guragain\thanks{Email: shatym17@gmail.com}}
\author[2]{Ravi Srivastava\thanks{Corresponding author, Email: ravi@nitsikkim.ac.in}}
\affil[1,2]{Department of Mathematics, National Institute of Technology, Sikkim 737139, India}
\date{}
\pgfplotsset{compat=1.18}
\begin{document}
\maketitle
\begin{abstract}
This paper explores interlacing inequalities in the Laplacian spectrum of signed cycles and investigates interlacing relationship between the spectrum of the net-Laplacian of a signed graph and its subgraph formed by removing a vertex together with its incident edges. Additionally, an inequality is derived between the net-Laplacian spectrum of a complete co-regular signed graph $\Gamma$ and the Laplacian spectrum of the graph obtained by removing any vertex $v$ from $\Gamma$. Also for a signed graph $\Gamma$, the net-Laplacian matrix is normalized and an inequality is derived between the spectrum of the normalized net-Laplacian of a signed graph and its subgraph, formed by contraction of edge and vertex.
\end{abstract}
\noindent \textbf{MSC 2020 Classifications:} 05C22, 05C50, 15A42\\
\textbf{Keywords:}
Signed graph, Laplacian matrix, net-Laplacian matrix, normalized net-Laplacian, interlacing inequalities.

\section{Introduction}

All graphs considered in this paper are undirected, simple and finite. A graph $G$ is defined as an ordered pair $G=(V,E)$ where $V$ represents the set of vertices and $E$ represents the set of edges. In 1953 Frank Harary made significant contributions by introducing the idea of signed graphs and the concept of balancedness within such graphs~\cite{ref6}. A signed graph $\Gamma=(G,\sigma)$ consists of a simple graph $G=(V,E)$ and a mapping $\sigma: E\rightarrow \{+,-\}$ known as the signature of $\Gamma$ which assign either positive or negative sign to edges.

The signed degree of a vertex $u$ (denoted by $sdeg(u)$) is the subtraction of the negative degree $(d_v^-)$ from the positive degree $(d_v^+)$ while the degree of the same vertex is simply the sum of its positive and negative degrees. For a signed graph $\Gamma=(G,\sigma)$ with vertex set $V(\Gamma)=\{u_1,u_2,\cdots,u_m\}$ the adjacency matrix of $\Gamma$ is the $m \times m$ matrix given by $A(\Gamma)=(a_{lk}^\sigma)$ where $a_{lk}^\sigma=\sigma(v_lv_k)a_{lk}$ and $a_{lk}=1$ if $v_l$ and $v_k$ are adjacent and $a_{lk}=0$ otherwise.
The Laplacian matrix and the net-Laplacian matrix of $\Gamma$ is given by $L(\Gamma)=D(\Gamma)-A(\Gamma)$ and $N(\Gamma)=D^\pm(\Gamma)-A(\Gamma)$ respectively, where $D(\Gamma)$ is the diagonal matrix with diagonal entries as vertex degree of $\Gamma$ and $D^\pm(\Gamma)=$ diag $(sdeg(u_1),sdeg(u_2),\cdots,$ $sdeg(u_m))$ is the net diagonal matrix of $\Gamma$. Clearly, $L(\Gamma)-N(\Gamma)=2D^-(\Gamma)=2$ diag $(d^-_{u_1},d^-_{u_2},\cdots,d^-_{u_m})$. The minimum negative vertex degree of $\Gamma$ is given by $\delta^{-}(\Gamma)= \text{min}\{d^{-}_u; u\in V(G)\}$ and the maximum negative vertex degree of $\Gamma$ by $\Delta^{-}(\Gamma)= \text{max}\{d^{-}_u; u\in V(G)\}$. If the signed degree of all the vertices in $\Gamma=(G,\sigma)$ is equal to a specific integer $s$, the graph is referred to as net-regular with a net degree of $s$~\cite{ref10}. Also, when the underlying graph $G$ is regular for some integer $r$, and $\Gamma$ is net-regular with a net degree of $s$, the signed graph $\Gamma$ is termed co-regular~\cite{ref11}. The pair $(r,s)$ is the co-regularity pair of $\Gamma$. Furthermore, if $r=|V(\Gamma)|-1$, then $\Gamma$ is a complete co-regular signed graph.

In any signed graph we can switch the sign of edges by a switching function $\theta:V \rightarrow \{+,-\}$~\cite{ref2,ref8}. Switching $\Gamma=(G,\sigma)$ by $\alpha$ create a new signed graph $\Gamma^\alpha=(G,\sigma^\alpha)$ whose underlying graph remains unchanged, but the signature is altered for each edge $e=u_lu_k$ by $\sigma^\alpha(e)=\alpha(u_l)\sigma(e)\alpha(u_k)$. Two signed graph  $\Gamma=(G,\sigma_1)$ and $\Sigma=(G,\sigma_2)$ with same underlying graph $G$ are switching equivalent (denoted as $\Gamma \sim \Sigma$) if there exists a switching function $\alpha$ such that $\sigma_2(e)=\sigma^\alpha_1(e)$ for every edge $e$ in $G$.

Lotker in~\cite{ref9} investigated the impact of removing a vertex on the Laplacian spectrum in an unsigned graph. Later, Wu et al. extended the analysis in~\cite{ref12} by exploring the interlacing relationship between the Laplacian spectra of the original unsigned graph and a graph obtained by deleting more than one vertices together with their incident edges. Also in~\cite{ref5}, Grone et al. demonstrated that in the case of an unsigned graph $G$, the eigenvalues of its Laplacian matrix $L(G)$ exhibit a precise interlacing pattern with the eigenvalues of $L(H)$ where $H$ is a graph obtained by removing a pendant vertex along with its incident edge from $G$. Signed graph also satisfy exactly the same interlacing relation obtained by Lotker in~\cite{ref9} but in section~\ref{sec2} we proved it using different method and use it to obtain an interlacing relation between the Laplacian spectra of two signed cycles whose order differ by $1$. Section~\ref{sec3} comprises of interlacing relation between the net-Laplacian of signed graphs whereas in Section~\ref{sec4} a net-Laplacian matrix is normalised and the interlacing relation between the normalized net-Laplacian spectra of the signed graph $\Gamma$ and its subgraph obtained by contraction (vertex and edge)~\cite{ref1} of $\Gamma$ is discussed.\\ \\
\textbf{Notations and results used:}
\begin{enumerate}[label=\Roman*., itemsep=0.5em]
\item Ordered spectrum $(\beta_1,\beta_2,\cdots,\beta_n)$ refers to eigenvalues arranged in non-decreasing order $i.e$, $\beta_1\leqslant \beta_2 \leqslant \cdots \leqslant \beta_n$
    \item $e_l$ denotes a column vector with 1 in the $l^{th}$ entry and zero elsewhere.
    \item $x \perp W$ implies $x^Ty=0$ for every $y \in W$.
    \item $\sum\limits_{\substack{\Gamma(u_l \sim u_t)}}$ indicates summation over all the unordered vertices $\{u_l,u_t\}$ in the graph $\Gamma$ such that $u_l$ and $u_t$ are adjacent.
    \item \textbf{Courant Fisher Theorem:}\label{v} Let $A\in \mathbb{R}^{m\times m}$ be a symmetric matrix with ordered spectrum $(\beta_1, \beta_2,\cdots, \beta_m)$. Let $p$ be any integer such that $1 \leqslant p \leqslant m$ then\\
\begin{equation*}
\begin{split}
\beta_{p}&=\min_{W_{m-p}} \max_{{\gamma (\neq 0)\in \mathbb{R}^{m}} \atop{\gamma \perp W_{m-p}}}\frac{\gamma^TA\gamma}{\gamma^T\gamma}
\end{split}
\end{equation*}
and
\begin{equation*}
\begin{split}
\beta_{p}&=\max_{W_{p-1}} \min_{{\gamma (\neq 0)\in \mathbb{R}^{m}}\atop{\gamma\perp W_{p-1}}}\frac{\gamma^TA\gamma}{\gamma^T\gamma}
\end{split}
\end{equation*}
where $W_l$ is $l$ dimensional subspace of $\mathbb{R}^m$ for $1\leqslant l \leqslant m$.
\item \textbf{Weyl's Theorem:}\label{vi} Let $\Lambda$ and $\Theta$ be two $m\times m$ complex Hermitian matrices and the respective ordered spectrum of $\Lambda$, $\Theta$ and $\Lambda+\Theta$ are $(\beta_1(\Lambda),\beta_2(\Lambda),\cdots,\beta_m(\Lambda))$,\\ $(\beta_1(\Theta),\beta_2(\Theta),\cdots,\beta_m(\Theta))$ and $(\beta_1(\Lambda+\Theta),\beta_2(\Lambda+\Theta),\cdots,\beta_m(\Lambda+\Theta))$ then for each $p=1,2,\cdots,m$ we have
\[
\beta_p(\Lambda)+\beta_1(\Theta) \leqslant \beta_p(\Lambda+\Theta) \leqslant \beta_p(\Lambda)+\beta_m(\Theta)
\]
\end{enumerate}
The proof of Courant-Fisher theorem and Wely's theorem is available in [chapter 4,~\cite{ref7}]
\section{Interlacing properties of eigenvalues of Laplacian Matrix}
\label{sec2}
\begin{therm}\label{Theorem 2.1}{\textnormal {Suppose $\Gamma=(G,\sigma)$ is a signed graph with vertex set $V(\Gamma)=\{u_1,u_2,\cdots,u_{m+1}\}$ and ordered Laplacian spectrum $(\alpha_1,\alpha_2,\cdots, \alpha_{m+1})$ and $\Gamma'$ is a signed graph obtained by removing any one vertex $v$ of $\Gamma$. Let $(\beta_1, \beta_2,\cdots, \beta_m)$ be the ordered Laplacian spectrum of $\Gamma'$ then\\
\[
\alpha_p \leqslant \beta_p+1 \leqslant \alpha_{p+1}+1~~\text{for}~~ p=1,2,....,m
\]}}
\end{therm}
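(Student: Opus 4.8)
The plan is to route everything through the $m\times m$ principal submatrix $M$ of $L(\Gamma)$ obtained by deleting the row and column indexed by the removed vertex $v$, and to chain two interlacing-type inequalities: a Cauchy-type interlacing between $L(\Gamma)$ and $M$, and a Weyl bound between $M$ and $L(\Gamma')$. After relabelling so that $v=u_{m+1}$, I would write
\[
L(\Gamma)=\begin{pmatrix} M & b\\ b^T & d_v\end{pmatrix},
\]
where $d_v$ is the degree of $v$ and $b\in\mathbb{R}^m$ records the (signed) incidences of $v$. The key structural observation is that $M$ and $L(\Gamma')$ differ only on the diagonal: the $u_i$-diagonal entry of $M$ is $\deg_\Gamma(u_i)$, whereas in $L(\Gamma')$ it is $\deg_{\Gamma'}(u_i)=\deg_\Gamma(u_i)-c_i$, where $c_i=1$ if $u_i$ is adjacent to $v$ and $c_i=0$ otherwise. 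Hence $M=L(\Gamma')+P$ with $P=\mathrm{diag}(c_1,\ldots,c_m)$ a diagonal $0/1$ matrix, so every eigenvalue of $P$ lies in $[0,1]$.

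For the first link, let $(\mu_1,\ldots,\mu_m)$ denote the ordered spectrum of $M$; I would establish $\alpha_p\leqslant\mu_p\leqslant\alpha_{p+1}$ for $p=1,\ldots,m$ directly from the Courant--Fisher theorem. The point is that for any vector of the form $\gamma=(x,0)^T$ (i.e.\ $\gamma\perp e_{m+1}$) one has $\gamma^TL(\Gamma)\gamma=x^TMx$ and $\gamma^T\gamma=x^Tx$, so the Rayleigh quotient of $L(\Gamma)$ restricted to the hyperplane $\{\gamma_{m+1}=0\}$ coincides with that of $M$. Feeding this identification into the min--max and max--min characterisations of $\alpha_p$ and $\mu_p$ — restricting or extending the test subspaces $W$ by one dimension and absorbing the orthogonality constraint against $e_{m+1}$ — yields the stated two-sided bound.

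For the second link, I would apply Weyl's theorem to the $m\times m$ Hermitian matrices $L(\Gamma')$ and $P$, whose sum is $M$. This gives
\[
\beta_p+\beta_1(P)\leqslant\mu_p\leqslant\beta_p+\beta_m(P),\qquad p=1,\ldots,m,
\]
and since $0\leqslant\beta_1(P)$ and $\beta_m(P)\leqslant 1$ it collapses to $\beta_p\leqslant\mu_p\leqslant\beta_p+1$. Combining the two links now finishes the argument: $\alpha_p\leqslant\mu_p\leqslant\beta_p+1$ gives the left inequality $\alpha_p\leqslant\beta_p+1$, while $\beta_p\leqslant\mu_p\leqslant\alpha_{p+1}$ gives $\beta_p\leqslant\alpha_{p+1}$, that is $\beta_p+1\leqslant\alpha_{p+1}+1$; chaining these produces $\alpha_p\leqslant\beta_p+1\leqslant\alpha_{p+1}+1$.

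I expect the delicate part to be the Cauchy-type interlacing of the first link: extracting $\alpha_p\leqslant\mu_p\leqslant\alpha_{p+1}$ purely from Courant--Fisher requires carefully matching the $(m+1)$-dimensional extremal subspaces for $\alpha_p$ and $\alpha_{p+1}$ against the $m$-dimensional ones for $\mu_p$, with the shift in index $p\mapsto p+1$ coming precisely from the single lost dimension $e_{m+1}$. Everything else — the diagonal identity $M=L(\Gamma')+P$ and the eigenvalue bounds $0\leqslant\beta_i(P)\leqslant 1$ — is routine, and Weyl's theorem is applied verbatim.
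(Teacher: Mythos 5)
Your proposal is correct, and every step checks out: the identity $M=L(\Gamma')+P$ with $P=\mathrm{diag}(c_1,\ldots,c_m)$ a $0/1$ diagonal matrix is exactly right, Cauchy interlacing gives $\alpha_p\leqslant\mu_p\leqslant\alpha_{p+1}$, Weyl's theorem gives $\beta_p\leqslant\mu_p\leqslant\beta_p+1$, and chaining the two produces the stated inequality. The route is organizationally different from the paper's, though the underlying estimates coincide. The paper never forms the principal submatrix $M$ explicitly: it runs a single Courant--Fisher computation on the quadratic form $\hat{\gamma}^TL(\Gamma)\hat{\gamma}=\sum_{\Gamma(u_i\sim u_j)}(\gamma_i-\sigma(u_iu_j)\gamma_j)^2$, imposes the extra constraint $\hat{\gamma}\perp e_{m+1}$ (which is your Cauchy-interlacing step in disguise), and then bounds the leftover term $\sum_{\Gamma(u_{m+1}\sim u_j)}\gamma_j^2$ between $0$ and $\sum_{j=1}^m\gamma_j^2$ (which is precisely your Weyl step, since that leftover term is $\gamma^TP\gamma$). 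Your factorization into two named classical theorems is more modular and arguably cleaner --- it isolates the one nontrivial piece (interlacing for a one-row-one-column principal submatrix) and makes the source of the ``$+1$'' transparent as $\lambda_{\max}(P)\leqslant 1$ --- whereas the paper's inline quadratic-form argument is self-contained and generalizes more directly to its later variants (the net-Laplacian and normalized versions in Sections 3 and 4), where the perturbation is no longer a clean diagonal $0/1$ matrix. The one place to be careful in your write-up is the Cauchy-interlacing link itself, which you correctly flag as the delicate part; it is standard, but if you prove it from Courant--Fisher you will end up reproducing essentially the subspace bookkeeping that the paper carries out.
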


\begin{proof}
Suppose $v=u_{m+1}$ and $p$ be any integer such that $1 \leqslant p \leqslant m$.
\begin{math}
 \text{Let} ~\hat{\gamma} = \begin{bmatrix}
           \gamma \\
           \gamma_{m+1} \\
         \end{bmatrix}
\end{math}
$\in \mathbb{R}^{m+1}, \gamma \in \mathbb{R}^m, \gamma_{m+1} \in \mathbb{R}$ and
$W_s$ and $\hat{W}_s$ be $s$-dimensional subspace of $\mathbb{R}^m$ and $\mathbb{R}^{m+1}$ respectively.
Using Courant-Fisher Theorem \ref{v},\\
\begin{equation*}
\begin{split}
\alpha_{p+1}&=\min_{\hat{W}_{(m+1)-(p+1)}}~ \max\limits_{\substack{\hat{\gamma} (\ne 0) \in \mathbb{R}^{m+1}}\atop{\hat{\gamma}\perp \hat{W}_{(m+1)-(p+1)}}}~\frac{\hat{\gamma}^TL(\Gamma)\hat{\gamma}}{\hat{\gamma}^T\hat{\gamma}}\\
&=\min_{\hat{W}_{m-p}} ~\max\limits_{\substack{\hat{\gamma} (\ne 0) \in \mathbb{R}^{m+1}}\atop{\hat{\gamma}\perp \hat{W}_{m-p}}}~\frac{\sum\limits_{\substack{\Gamma(u_i \sim u_j)}}(\gamma_i-\sigma(u_iu_j)\gamma_j)^2}{\gamma_1^2+\gamma_2^2+\cdots+\gamma_m^2+\gamma_{m+1}^2}  \\ \\
  &\geqslant \min_{\hat{W}_{m-p}} ~\max\limits_{\substack{\hat{\gamma} (\ne 0) \in \mathbb{R}^{m+1}\\\hat{\gamma}\perp \hat{W}_{m-p}}\atop{\hat{\gamma} \perp e_{m+1}}}~\frac{\sum\limits_{\substack{\Gamma'(u_i \sim u_j)}}(\gamma_i-\sigma(u_iu_j)\gamma_j)^2+\sum\limits_{\substack{\Gamma(u_{n+1} \sim u_j)}}(\gamma_{m+1}-\sigma(u_{m+1}u_j)\gamma_j)^2}{\gamma_1^2+\gamma_2^2+\cdots+\gamma_m^2+\gamma_{m+1}^2}
\end{split}
\end{equation*}\\
As $\hat{\gamma} \perp e_{m+1}$ we can consider $\gamma_{m+1}=0$ and simply delete $(m+1)^{th}$ index of elements of $\hat{W}_{m-p}$.\\ \\
\begin{equation*}
    \begin{split}
\therefore ~\alpha_{p+1}&\geqslant \min_{W_{m-p}}~ \max\limits_{\substack{\gamma (\ne 0) \in \mathbb{R}^{m}\\\atop{\gamma\perp W_{m-p}}}}~\frac{\sum\limits_{\substack{\Gamma'(u_i \sim u_j)}}(\gamma_i-\sigma(u_iu_j)\gamma_j)^2+\sum\limits_{\substack{\Gamma(u_{m+1} \sim u_j)}}\gamma_j^2}{\gamma_1^2+\gamma_2^2+\cdots+\gamma_m^2}  \\ \\
&\geqslant \min_{W_{m-p}}~ \max\limits_{\substack{\gamma (\ne 0) \in \mathbb{R}^{m}\\\atop{\gamma \perp W_{m-p}}}}~\frac{\sum\limits_{\substack{\Gamma'(u_i \sim u_j)}}(\gamma_i-\sigma(u_iu_j)\gamma_j)^2}{\gamma_1^2+\gamma_2^2+\cdots+\gamma_m^2}=\beta_p
\end{split}
\end{equation*}
and
\begin{equation*}
\begin{split}
\alpha_{p}&=\max_{\hat{W}_{p-1}}~ \min\limits_{\substack{\hat{\gamma} (\ne 0) \in \mathbb{R}^{m+1}}\atop{\hat{\gamma}\perp \hat{W}_{p-1}}}~\frac{\hat{\gamma}^TL(\Gamma)\hat{\gamma}}{\hat{\gamma}^T\hat{\gamma}}  \\
&=\max_{\hat{W}_{p-1}}~ \min\limits_{\substack{\hat{\gamma} (\ne 0) \in \mathbb{R}^{m+1}}\atop{\hat{\gamma}\perp \hat{W}_{p-1}}}~\frac{\sum\limits_{\substack{\Gamma(u_i \sim u_j)}}(\gamma_i-\sigma(u_iu_j)\gamma_j)^2}{\gamma_1^2+\gamma_2^2+\cdots+\gamma_n^2+\gamma_{m+1}^2}\\
&\leqslant \max_{\hat{W}_{p-1}}~ \min\limits_{\substack{\hat{\gamma} (\ne 0) \in \mathbb{R}^{m+1}\\\hat{\gamma}\perp \hat{W}_{p-1}}\atop{\hat{\gamma} \perp e_{m+1}}}~\frac{\sum\limits_{\substack{\Gamma'(u_i \sim u_j)}}(\gamma_i-\sigma(u_iu_j)\gamma_j)^2+\sum\limits_{\substack{\Gamma(u_{m +1} \sim u_j)}}(\gamma_{m+1}-\sigma(u_{m+1}u_j)\gamma_j)^2}{\gamma_1^2+\gamma_2^2+\cdots+\gamma_m^2+\gamma_{m+1}^2}  \\
&= \max_{W_{p-1}}~ \min\limits_{\substack{\gamma (\ne 0) \in \mathbb{R}^{m}\\\atop{\gamma\perp W_{p-1}}}}~\frac{\sum\limits_{\substack{\Gamma'(u_i \sim u_j)}}(\gamma_i-\sigma(u_iu_j)\gamma_j)^2+\sum\limits_{\substack{\Gamma(u_{m+1} \sim u_j)}}\gamma_j^2}{\gamma_1^2+\gamma_2^2+\cdots+\gamma_m^2}  \\
&\leqslant \max_{W_{p-1}}~ \min\limits_{\substack{\gamma (\ne 0) \in \mathbb{R}^{m}\\\atop{\gamma \perp W_{p-1}}}}~\Bigg[\frac{\sum\limits_{\substack{\Gamma'(u_i \sim u_j)}}(\gamma_i-\sigma(u_iu_j)\gamma_j)^2}{\gamma_1^2+\gamma_2^2+\cdots+\gamma_m^2}+1\Bigg]=\beta_p+1
\end{split}
\end{equation*}
Thus, $\alpha_p \leq \beta_p+1 \leq \alpha_{p+1}+1$ for $p=1,2,3,\cdots,m$.
\end{proof}
\begin{cor}{\textnormal{Suppose $\Gamma=(G,\sigma)$ is a signed graph of order $m+1$ whose vertex $u$ is adjacent to all the remaining vertices of $G$. If $\Gamma'=(G',\sigma)$ is a signed graph obtained by removing vertex $u$ from $\Gamma$ and $(\alpha_1, \alpha_2, \cdots, \alpha_{m+1})$ and $(\beta_1, \beta_2, \cdots, \beta_m)$ are respective ordered Laplacian spectrum of $\Gamma$ and $\Gamma'$ then,\\
\[
\alpha_p \leqslant \beta_p +1 \leqslant \alpha_{p+1}~~\text{for}~~ p=1,2,....,m
\]}}
\end{cor}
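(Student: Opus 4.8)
The plan is to revisit the proof of Theorem~\ref{Theorem 2.1}, because the corollary differs from it only in the right-hand inequality: the theorem delivers $\alpha_p \leqslant \beta_p+1 \leqslant \alpha_{p+1}+1$, whereas here I must sharpen the right end to $\beta_p+1 \leqslant \alpha_{p+1}$. Writing $u=u_{m+1}$, I would first note that the left-hand bound $\alpha_p \leqslant \beta_p+1$ was obtained in Theorem~\ref{Theorem 2.1} without using any adjacency assumption on $u$, so I would reuse that half of the argument verbatim; the hypothesis that $u$ is a universal vertex is not needed there. All the new work concentrates on improving the right-hand inequality.

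For that inequality I would follow the first block of the Theorem~\ref{Theorem 2.1} proof up to the point where, after imposing the extra constraint $\hat{\gamma}\perp e_{m+1}$ and setting $\gamma_{m+1}=0$, one reaches
\[
\alpha_{p+1} \geqslant \min_{W_{m-p}}\ \max_{\substack{\gamma(\ne 0)\in\mathbb{R}^m\\ \gamma\perp W_{m-p}}}\ \frac{\sum_{\Gamma'(u_i\sim u_j)}(\gamma_i-\sigma(u_iu_j)\gamma_j)^2 + \sum_{\Gamma(u_{m+1}\sim u_j)}\gamma_j^2}{\gamma_1^2+\gamma_2^2+\cdots+\gamma_m^2}.
\]
In Theorem~\ref{Theorem 2.1} the second sum in the numerator is simply discarded as a nonnegative quantity, and this is precisely the margin I want to recover. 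The decisive observation is that $u_{m+1}$ is adjacent to \emph{every} remaining vertex, so the index $j$ in $\sum_{\Gamma(u_{m+1}\sim u_j)}$ runs over all of $1,\dots,m$; moreover, since $\gamma_{m+1}=0$, each term $(\gamma_{m+1}-\sigma(u_{m+1}u_j)\gamma_j)^2$ collapses to $\sigma(u_{m+1}u_j)^2\gamma_j^2=\gamma_j^2$ regardless of the edge sign. Hence $\sum_{\Gamma(u_{m+1}\sim u_j)}\gamma_j^2 = \gamma_1^2+\cdots+\gamma_m^2$ is exactly the denominator, and notably no switching is required because the signs enter only through $\sigma^2=1$.

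Substituting this identity, the quotient splits as $\dfrac{\sum_{\Gamma'(u_i\sim u_j)}(\gamma_i-\sigma(u_iu_j)\gamma_j)^2}{\gamma_1^2+\cdots+\gamma_m^2}+1$, and the min-max characterization of $\beta_p$ from the Courant-Fisher Theorem~\ref{v} then yields $\alpha_{p+1}\geqslant \beta_p+1$. Combining this with the untouched bound $\alpha_p\leqslant \beta_p+1$ gives $\alpha_p \leqslant \beta_p+1 \leqslant \alpha_{p+1}$ for $p=1,\dots,m$, as claimed. I do not anticipate a genuine obstacle: the entire gain flows from the single structural fact that a universal vertex makes the neighbour-sum of squared coordinates exhaust the full norm, turning the step that was slack by $1$ in the theorem into an exact contribution of $+1$. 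The only point demanding care is the bookkeeping check that the signed cross terms really reduce to $\gamma_j^2$, which they do for the reason above.
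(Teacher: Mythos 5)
Your proposal is correct and follows essentially the same route as the paper: both exploit that a universal vertex makes $\sum_{\Gamma(u_{m+1}\sim u_j)}\gamma_j^2$ equal the full denominator $\gamma_1^2+\cdots+\gamma_m^2$, turning the slack step in Theorem~\ref{Theorem 2.1} into an exact $+1$ and yielding $\alpha_{p+1}\geqslant\beta_p+1$. The only cosmetic difference is that the paper also re-runs the max--min half with the same identity (obtaining $\alpha_p\leqslant\beta_p+1$ as an equality of the bounding expression), whereas you simply reuse that half from the theorem, which is equally valid.
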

\begin{proof} Let $u=u_{m+1}$. So,  ${\sum\limits_{\substack{\Gamma(u_{m+1} \sim u_j)}}(\gamma_{m+1}-\sigma(u_{m+1}u_j)\gamma_j)^2}={\displaystyle \sum_{j=1}^{m}(\gamma_{m+1}-\sigma(u_{m+1}u_j)\gamma_j)^2}$. Thus
\begin{equation*}
\begin{split}
\alpha_{p+1}&\geqslant \min_{W_{m-p}}~ \max\limits_{\substack{\gamma (\neq 0) \in \mathbb{R}^{m}\\\atop{\gamma\perp W_{m-p}}}}~\frac{\sum\limits_{\substack{\Gamma'(u_i \sim u_j)}}(\gamma_i-\sigma(u_iu_j)\gamma_j)^2+\displaystyle \sum_{j=1}^{m}\gamma_j^2}{\gamma_1^2+\gamma_2^2+\cdots+\gamma_m^2}  \\ \\
&= \min_{W_{m-p}} ~\max\limits_{\substack{\gamma (\ne 0) \in \mathbb{R}^{m}\\\atop{\gamma \perp W_{m-p}}}}~\Bigg[\frac{\sum\limits_{\substack{\Gamma'(u_i \sim u_j)}}(\gamma_i-\sigma(u_iu_j)\gamma_j)^2}{\gamma_1^2+\gamma_2^2+\cdots+\gamma_m^2}+1\Bigg]=\beta_p+1
\end{split}
\end{equation*}
and
\begin{equation*}
\begin{split}
\alpha_p&\leqslant \max_{W_{p-1}}~ \min\limits_{\substack{\gamma (\ne 0) \in \mathbb{R}^{m}\\\atop{\gamma\perp W_{p-1}}}}~\frac{\sum\limits_{\substack{\Gamma'(u_i \sim u_j)}}(\gamma_i-\sigma(u_iu_j)\gamma_j)^2+\displaystyle \sum_{j=1}^{m}\gamma_j^2}{\gamma_1^2+\gamma_2^2+\cdots+\gamma_m^2}  \\ \\
&= \max_{W_{p-1}} ~\min\limits_{\substack{\gamma (\ne 0) \in \mathbb{R}^{m}\\\atop{\gamma \perp W_{p-1}}}}~\Bigg[\frac{\sum\limits_{\substack{\Gamma'(u_i \sim u_j)}}(\gamma_i-\sigma(u_iu_j)\gamma_j)^2}{\gamma_1^2+\gamma_2^2+\cdots+\gamma_m^2}+1\Bigg]=\beta_p+1
\end{split}
\end{equation*}
Thus, $\alpha_p \leqslant \beta_p+1 \leqslant \alpha_{p+1}$ for $p=1,2,3,\cdots,m$.
\end{proof}
\begin{lem}\label{lemma 2.3}{\textnormal{Let $\Gamma=(G,\sigma)$ be a signed graph with $m$ vertices and Laplacian spectrum $(\alpha_1, \alpha_2, \cdots \alpha_{m+1})$ and $\Gamma'$ is the signed graph obtained by removing any one edge $uv \in E(\Gamma)$ whose ordered Laplacian spectrum is $(\beta_1, \beta_2, \cdots, \beta_m)$ then,
\[
\beta_p \leqslant \alpha_p \leqslant \beta_p+2 ~~\text{for}~~ k=1,2,....,m
\]}}
\end{lem}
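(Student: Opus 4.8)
The plan is to realise the passage from $\Gamma'$ back to $\Gamma$ as the addition of a single rank-one positive semidefinite matrix, and then to invoke Weyl's Theorem~\ref{vi}. Since deleting the edge $uv$ does not change the vertex set, both $L(\Gamma)$ and $L(\Gamma')$ are symmetric $m\times m$ matrices indexed by the same vertices, so their difference $M:=L(\Gamma)-L(\Gamma')$ is well defined.

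First I would compute $M$ entrywise. Removing the edge $uv$ lowers the ordinary degree of each of $u$ and $v$ by one and deletes the adjacency entries $\sigma(uv)$ in positions $(u,v)$ and $(v,u)$, while every other entry of the Laplacian is untouched. Writing $s=\sigma(uv)\in\{+1,-1\}$ and using $s^2=1$, this yields $M=(e_u-s\,e_v)(e_u-s\,e_v)^{T}$. Equivalently, in the quadratic-form language used throughout Section~\ref{sec2}, $M$ is exactly the term dropped from $\sum_{\Gamma(u_i\sim u_j)}(\gamma_i-\sigma(u_iu_j)\gamma_j)^2$ when the edge $uv$ is deleted, namely $\gamma^{T}M\gamma=(\gamma_u-s\,\gamma_v)^2\ge 0$.

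Next I would read off the spectrum of $M$. As an outer product $ww^{T}$ with $w=e_u-s\,e_v\neq 0$, the matrix $M$ is positive semidefinite of rank one, so its only nonzero eigenvalue is $\|w\|^2=1+s^2=2$ (this also follows from $\operatorname{tr}M=2$ together with rank one), while $0$ occurs with multiplicity $m-1$. Hence the ordered spectrum of $M$ satisfies $\beta_1(M)=0$ and $\beta_m(M)=2$.

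Finally, I would apply Weyl's Theorem~\ref{vi} with $\Lambda=L(\Gamma')$ and $\Theta=M$, so that $\Lambda+\Theta=L(\Gamma)$. Writing $\beta_p=\beta_p(\Lambda)$ and $\alpha_p=\beta_p(\Lambda+\Theta)$, the two-sided bound $\beta_p(\Lambda)+\beta_1(\Theta)\le\beta_p(\Lambda+\Theta)\le\beta_p(\Lambda)+\beta_m(\Theta)$ specialises to $\beta_p+0\le\alpha_p\le\beta_p+2$, which is precisely the claim for $p=1,\dots,m$. The only step requiring care is the identification of $M$: one must verify that, irrespective of the sign $s=\pm1$, the two diagonal $1$'s and the off-diagonal entries $-s$ combine into a genuine rank-one outer product with nonzero eigenvalue $2$. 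Once that is in hand, the conclusion is immediate from Weyl, so I do not anticipate a serious obstacle.
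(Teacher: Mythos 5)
Your proposal is correct and follows the same route as the paper, whose proof simply cites Weyl's Theorem~\ref{vi}; you have merely filled in the details the paper leaves implicit, namely that $L(\Gamma)-L(\Gamma')=(e_u-\sigma(uv)e_v)(e_u-\sigma(uv)e_v)^T$ is rank-one positive semidefinite with extreme eigenvalues $0$ and $2$.
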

\begin{proof} Follows directly from Weyl's Theorem \ref{vi}.
\end{proof}
\begin{therm}\label{Theorem 2.4}{\textnormal{Suppose $(C_{m+1},\sigma_1)$ be a signed cycle with vertex set $\{v_1,v_2,\cdots,v_{m+1}\}$ and ordered Laplacian spectrum $(\alpha_1, \alpha_2,\cdots, \alpha_{m+1})$ and $(C_{m},\sigma_2)$ is a signed cycle with vertex set $\{v_1,v_2,\cdots,v_m\}$ and ordered Laplacian spectrum $(\beta_1, \beta_2,\cdots, \beta_m)$ with $\sigma_2(e)=\sigma_1(e)$ for all $e \in E(C_m)$ except for the edge $v_1v_m$ and $\sigma_2(v_1v_m)$ is defined randomly then,
\[
\alpha_p-1 \leqslant \beta_p \leqslant \alpha_{p+1}+2 ~~\text{for}~~p=1,2,....,m
\]}}
\end{therm}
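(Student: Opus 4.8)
The plan is to route through an intermediate signed graph, namely a signed path on $m$ vertices, and to apply Theorem~\ref{Theorem 2.1} and Lemma~\ref{lemma 2.3} in turn. The key observation is that one and the same signed path arises both by deleting a vertex from the larger cycle and by deleting an edge from the smaller cycle, so the two inequalities can be chained together through its spectrum.

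First I would delete the vertex $v_{m+1}$ from $(C_{m+1},\sigma_1)$. Since $v_{m+1}$ is incident only to the edges $v_mv_{m+1}$ and $v_{m+1}v_1$, the resulting signed graph is the path $v_1-v_2-\cdots-v_m$ carrying the signs $\sigma_1(v_iv_{i+1})$ for $i=1,\ldots,m-1$. Denote its ordered Laplacian spectrum by $(\mu_1,\mu_2,\cdots,\mu_m)$. Applying Theorem~\ref{Theorem 2.1} to $\Gamma=(C_{m+1},\sigma_1)$ and this path gives $\alpha_p\leqslant \mu_p+1\leqslant \alpha_{p+1}+1$, that is,
\[
\alpha_p-1\leqslant \mu_p\leqslant \alpha_{p+1},\qquad p=1,2,\ldots,m.
\]

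Next I would observe that deleting the single edge $v_1v_m$ from $(C_m,\sigma_2)$ produces exactly the same signed path. Indeed, the surviving edges $v_1v_2,\ldots,v_{m-1}v_m$ all lie in $E(C_m)\setminus\{v_1v_m\}$, where $\sigma_2$ agrees with $\sigma_1$ by hypothesis; hence the two signed paths are identical and share the spectrum $(\mu_1,\mu_2,\cdots,\mu_m)$. Applying Lemma~\ref{lemma 2.3} to $\Gamma=(C_m,\sigma_2)$ and its edge-deleted subgraph yields $\mu_p\leqslant \beta_p\leqslant \mu_p+2$ for each $p$. Combining the two chains, from $\alpha_p-1\leqslant \mu_p\leqslant \beta_p$ and $\beta_p\leqslant \mu_p+2\leqslant \alpha_{p+1}+2$, delivers $\alpha_p-1\leqslant \beta_p\leqslant \alpha_{p+1}+2$, as claimed.

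The step that requires care, rather than a genuine obstacle, is verifying that the intermediate object is literally the same signed path in both applications. This is exactly where the assumption $\sigma_2(e)=\sigma_1(e)$ for all $e\neq v_1v_m$ is used, and it is also why the arbitrariness of $\sigma_2(v_1v_m)$ is harmless: that edge is removed before the comparison is made, so its sign never enters the spectrum $(\mu_1,\mu_2,\cdots,\mu_m)$. All of the quantitative content is supplied by Theorem~\ref{Theorem 2.1} and Lemma~\ref{lemma 2.3}, so no additional spectral estimation is needed once the common signed path is identified.
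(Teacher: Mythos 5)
Your proposal is correct and follows essentially the same route as the paper: both pass through the signed path obtained by deleting $v_{m+1}$ from $(C_{m+1},\sigma_1)$ (your $(\mu_1,\ldots,\mu_m)$ is the paper's $(\epsilon_1,\ldots,\epsilon_m)$), apply Theorem~\ref{Theorem 2.1} and Lemma~\ref{lemma 2.3} to get $\alpha_p-1\leqslant\mu_p\leqslant\alpha_{p+1}$ and $\mu_p\leqslant\beta_p\leqslant\mu_p+2$, and chain the two. Your explicit verification that the same signed path arises from both deletions, and that the sign of $v_1v_m$ is irrelevant, is a point the paper leaves implicit.
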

\begin{proof} Suppose $(H,\sigma_1)$ be a signed graph obtained by removing vertex $v_{m+1}$ from $(C_{m+1},\sigma_1)$ and $(\epsilon_1, \epsilon_2, \cdots, \epsilon_m)$ be the ordered Laplacian eigenvalues of $(H,\sigma_1)$ then by Theorem \ref{Theorem 2.1}\\
\begin{equation}
\label{equation 2.1}
\alpha_p \leqslant \epsilon_p+1 \leqslant \alpha_{p+1}+1~~~~ \text{for}~~~~~ p=1, 2, \cdots, m
\end{equation}
Also, $(C_{m},\sigma_2)$ is obtained by joining edge $u_1u_m$ in $(H,\sigma_1)$ and randomly assigning any sign to $v_1v_m$ then by Lemma \ref{lemma 2.3}
\begin{equation}
\label{equation 2.2}
    \epsilon_p \leqslant \beta_p \leqslant \epsilon_p+2~~\text{for}~~p=1, 2, \cdots, m
\end{equation}
Using equation \ref{equation 2.1} and equation \ref{equation 2.2} we get $\alpha_p-1 \leqslant \beta_p \leqslant \alpha_{p+1}+2~~\text{for}~~p=1,2,....,m$.
\end{proof}
\begin{cor}{\textnormal{Theorem 2.5 holds for any signature function on $C_{m+1}$ and $C_m$.}}
\end{cor}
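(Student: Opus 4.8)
The plan is to deduce the corollary from Theorem~\ref{Theorem 2.4} by stripping away the apparent restriction on the signatures through switching. The engine is the observation that the ordered Laplacian spectrum of a signed graph is a switching invariant: if $\alpha$ is a switching function and $S=\operatorname{diag}(\alpha(v_1),\dots,\alpha(v_{m+1}))$, then $A(\Gamma^\alpha)=SA(\Gamma)S$ while $D(\Gamma^\alpha)=D(\Gamma)$, whence $L(\Gamma^\alpha)=SL(\Gamma)S$. Because $S$ is a symmetric orthogonal $\pm1$ matrix, this is an orthogonal similarity, so $\Gamma$ and $\Gamma^\alpha$ share the same ordered Laplacian spectrum. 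Consequently I may replace $\sigma_1$ and $\sigma_2$ by any switching-equivalent signatures without altering the numbers $\alpha_p$ and $\beta_p$ appearing in the claimed inequality.

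First I would establish a normal form for signatures on a cycle. Treating the path $v_1v_2\cdots v_m$, with edge set $\{v_1v_2,\dots,v_{m-1}v_m\}$, as a spanning subtree, I can choose $\alpha$ by setting $\alpha(v_1)=+$ and then $\alpha(v_i)=\alpha(v_{i-1})\sigma(v_{i-1}v_i)$ for $i=2,\dots,m$, which turns every edge of this path positive; since the path contains no cycle this is always possible, and the signs of the remaining edges are then forced only up to the cycle sign $\prod_e\sigma(e)$, the single switching invariant of a signed cycle.

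Next, given arbitrary $\sigma_1$ on $C_{m+1}$ and $\sigma_2$ on $C_m$, I would apply this reduction to each graph independently. Switch $\sigma_1$ to $\tilde\sigma_1$ so that the edges $v_1v_2,\dots,v_{m-1}v_m$ of $C_{m+1}$ are all positive, and switch $\sigma_2$ to $\tilde\sigma_2$ so that the same edges of $C_m$ are all positive. Then $\tilde\sigma_1$ and $\tilde\sigma_2$ coincide on every edge of $C_m$ except the closing edge $v_1v_m$, whose sign under $\tilde\sigma_2$ is simply whatever the cycle sign of $\sigma_2$ dictates. This is precisely the hypothesis of Theorem~\ref{Theorem 2.4}, in which $\sigma_2(v_1v_m)$ is permitted to be assigned arbitrarily. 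Applying that theorem to the pair $(\tilde\sigma_1,\tilde\sigma_2)$ gives $\alpha_p-1\le\beta_p\le\alpha_{p+1}+2$ for their spectra, and by switching invariance these spectra equal those of $\sigma_1$ and $\sigma_2$, so the inequality holds for the original pair.

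The inequality itself is inherited verbatim, so the substance of the argument lies in the reduction rather than in any estimate, and the main point requiring care is that the two switchings are genuinely independent. On $C_m$ exactly one edge, $v_1v_m$, remains unconstrained once the shared path is made positive, matching the free sign allowed by the theorem; on $C_{m+1}$ the two edges incident to $v_{m+1}$ lie outside the shared path and hence never enter the hypothesis, which is why the cycle sign of $C_{m+1}$ imposes no compatibility condition coupling it to that of $C_m$. Verifying this absence of hidden coupling between the two cycle signs is the step I expect to demand the most attention.
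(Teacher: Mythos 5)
Your argument is correct and follows essentially the same route as the paper: both reduce to Theorem~\ref{Theorem 2.4} by exploiting the switching invariance of the Laplacian spectrum and the fact that a signed cycle is determined up to switching by its single cycle-sign invariant. Your version is somewhat more explicit than the paper's (which appeals to a figure of the two switching classes of $C_{m+1}$), in particular in writing out $L(\Gamma^\alpha)=SL(\Gamma)S$ and the spanning-path normalization, but the underlying idea is identical.
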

\begin{proof} Let L=$(C_{m+1},\sigma_1)$ and H=$(C_m,\sigma_2)$, where $\sigma_1$ and $\sigma_2$ are random sign functions on $C_{m+1}$ and $C_m$ respectively. We can switch $L$ accordingly as it is balanced or unbalanced shown in Figure \ref{fig 1}.
\begin{figure}[ht]
    \centering
   \includegraphics[scale=0.5]{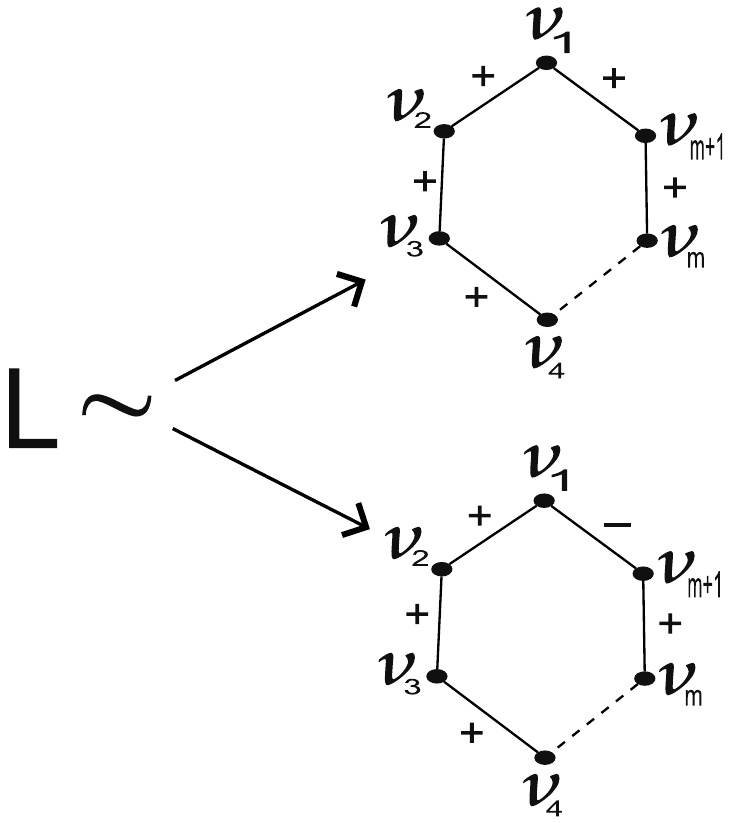}
    \caption{Possible signed cycle of order $m+1$ upto switching isomorphism.}
    \label{fig 1}
\end{figure}\\
If H is balanced then we can remove vertex $v_{m+1}$ from L and join vertices $v_1$ and $v_m$ with $\sigma_1(v_1v_m)=+$ to obtain a signed graph switching equivalent to H and
if H is unbalanced then we can remove $v_{m+1}$ from L and join vertices $v_1$ and $v_m$ with $\sigma_1(v_1v_m)=-$ to obtain a signed graph switching equivalent to H.\\
In both the cases result follows by Theorem \ref{Theorem 2.4}.
\end{proof}
\begin{lem}\label{lemma 2.6}{\textnormal{ \begin{math}
 \text{Let} ~\gamma = \begin{bmatrix}
           \gamma_1 \\
           \gamma_2 \\
           \vdots\\
           \gamma_m
         \end{bmatrix}
\end{math}
$\in \mathbb{R}^m$ and $f(\gamma_1,\gamma_2,\cdots,\gamma_m)$ be a bounded function on $m$ variables then for any $\gamma_{m+1}\in \mathbb{R}$
\begin{equation*}
\begin{split}
    \max_{\gamma\ne 0} f(\gamma_1,\gamma_2,\cdots,\gamma_m) = \max_{\hat{\gamma}\ne 0} f(\gamma_1,\gamma_2,\cdots,\gamma_m)~~\text{and}~~\min_{\gamma\ne 0} f(\gamma_1,\gamma_2,\cdots,\gamma_m) = \min_{\hat{\gamma}\ne 0} f(\gamma_1,\gamma_2,\cdots,\gamma_m)
     \end{split}
 \end{equation*}
where \begin{math}
 \hat{\gamma} = \begin{bmatrix}
           \gamma_1 \\
           \gamma_2 \\
           \vdots\\
           \gamma_m\\
           \gamma_{m+1}
         \end{bmatrix}
 \in \mathbb{R}^{m+1}
\end{math}}}
\end{lem}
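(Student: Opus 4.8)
The plan is to reduce both claimed identities to a single set equality between the ranges of $f$ over the two domains, exploiting the fact that $f$ ignores the appended coordinate $\gamma_{m+1}$. Concretely, I would write $S = \{\, f(\gamma_1,\cdots,\gamma_m) : \gamma \in \mathbb{R}^m,\ \gamma \ne 0 \,\}$ for the set of values attained on the left-hand side and $\hat S = \{\, f(\gamma_1,\cdots,\gamma_m) : \hat\gamma \in \mathbb{R}^{m+1},\ \hat\gamma \ne 0 \,\}$ for the right-hand side, where in $\hat S$ the function still depends only on the first $m$ components of $\hat\gamma$. Since $f$ is bounded, each of the four extrema is finite, so it suffices to prove $S = \hat S$; the identities $\max_{\gamma \ne 0} f = \max_{\hat\gamma \ne 0} f$ and $\min_{\gamma \ne 0} f = \min_{\hat\gamma \ne 0} f$ then follow immediately by taking the supremum and the infimum of the common set (and the maximum/minimum is attained on one side exactly when it is on the other).

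The inclusion $S \subseteq \hat S$ is the routine direction. Given any nonzero $\gamma \in \mathbb{R}^m$, I would append the prescribed scalar $\gamma_{m+1}$ to form $\hat\gamma = (\gamma_1,\cdots,\gamma_m,\gamma_{m+1})^T$; its first $m$ entries are not all zero, so $\hat\gamma \ne 0$, while the numerical value $f(\gamma_1,\cdots,\gamma_m)$ is unchanged. Hence every value in $S$ reappears in $\hat S$, giving $\max S \leqslant \max \hat S$ and $\min S \geqslant \min \hat S$.

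The reverse inclusion $\hat S \subseteq S$ is where the only genuine subtlety lies, and this is the step I expect to be the main obstacle. Given $\hat\gamma \ne 0$, the value $f(\gamma_1,\cdots,\gamma_m)$ is determined solely by the truncation $\gamma = (\gamma_1,\cdots,\gamma_m)^T$. If $\gamma \ne 0$ this value manifestly belongs to $S$; the delicate configuration is $\gamma = 0$ together with $\gamma_{m+1}\ne 0$, where $\hat\gamma \ne 0$ holds even though the truncation vanishes. The point to make carefully is that in every application in this paper $f$ is a Rayleigh quotient whose denominator is $\gamma_1^2+\cdots+\gamma_m^2$, so $f$ is defined only for $\gamma \ne 0$; consequently the degenerate case $\gamma = 0$ contributes no admissible value to $\hat S$, and each value in $\hat S$ in fact arises from a genuinely nonzero truncation $\gamma$. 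This yields $\hat S \subseteq S$, and combined with the first inclusion gives $S = \hat S$. Taking suprema and infima of this common set then establishes both displayed equalities, completing the argument.
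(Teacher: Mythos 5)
Your proof is correct and rests on essentially the same idea as the paper's: since $f$ ignores the appended coordinate, an extremizer of $f$ over $\mathbb{R}^m\setminus\{0\}$ extends by an arbitrary last entry to an extremizer over $\mathbb{R}^{m+1}\setminus\{0\}$ and conversely, so the two ranges (hence the two maxima and the two minima) coincide. If anything, your version is more careful than the paper's one-line argument, which simply asserts that a maximizer $(r_1,\dots,r_m)^T$ lifts to a maximizer $(r_1,\dots,r_m,a)^T$ — thereby assuming attainment and never addressing the degenerate vectors $\hat\gamma\ne 0$ with $\gamma=0$ and $\gamma_{m+1}\ne 0$, precisely the subtlety you flag and resolve by observing that in every application $f$ is a Rayleigh quotient undefined at $\gamma=0$.
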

 \begin{proof} Let function $f$ attains its maximum value at $\alpha=(r_1,r_2,\cdots,r_m)^T$ and minimum value at $\beta=(t_1,t_2,\cdots,t_m)^T$ then in $(m+1)^{th}$ dimension $\hat{\alpha}=(r_1,r_2,\cdots,r_m,a)^T$ gives the maximum value of $f$ for any $a\in \mathbb{R}$ and $\hat{\beta}=(t_1,t_2,\cdots,t_m,b)^T$ gives the minimum value of $f$ for any $b\in \mathbb{R}$.
 \end{proof}
\begin{therm}{\textnormal{Let $\Sigma=(G,\sigma)$ be a signed graphs of order $m+1$ with $u\in V(\Sigma)$ such that $d(u)=1$ and $\Sigma'=(G',\sigma)$ be a signed path obtained by removing vertex $u$ from $\Sigma$. Suppose $uv\in E(\Gamma)$ with $\sigma(uv)=\star$ where $\star \in \{+,-\}$. If $(\alpha_1, \alpha_2 , \cdots, \alpha_{m+1})$ and $(\beta_1, \beta_2, \cdots, \beta_m)$ are respective ordered Laplacian spectrum of $\Sigma$ and $\Sigma'$ then,\\
\[
\alpha_p \leqslant \beta_p \leqslant \alpha_{p+1}~~\text {for}~~ p=1,2,....,m
\]}}
\end{therm}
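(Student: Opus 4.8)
The plan is to prove the two inequalities separately. The upper bound $\beta_p\leqslant\alpha_{p+1}$ is already available for free: since $\Sigma'$ is obtained from $\Sigma$ by deleting a single vertex, Theorem~\ref{Theorem 2.1} applies verbatim, and its second inequality $\beta_p+1\leqslant\alpha_{p+1}+1$ immediately yields $\beta_p\leqslant\alpha_{p+1}$. So the whole content of the statement lies in the sharp lower bound $\alpha_p\leqslant\beta_p$, and this is exactly where the hypothesis $d(u)=1$ is indispensable. To set up, write $u=u_{m+1}$ and let $v=u_k$ be its unique neighbour, with $\sigma(uv)=\star$. For $\hat{\gamma}=(\gamma,\gamma_{m+1})^{T}\in\mathbb{R}^{m+1}$ with $\gamma\in\mathbb{R}^{m}$, the pendant structure splits the Laplacian quadratic form as
\[
\hat{\gamma}^{T}L(\Sigma)\hat{\gamma}=\sum_{\Sigma'(u_i\sim u_j)}(\gamma_i-\sigma(u_iu_j)\gamma_j)^2+(\gamma_{m+1}-\star\gamma_k)^2=\gamma^{T}L(\Sigma')\gamma+(\gamma_{m+1}-\star\gamma_k)^2,
\]
because the only edge of $\Sigma$ absent from $\Sigma'$ is $u_{m+1}u_k$. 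The single extra square $(\gamma_{m+1}-\star\gamma_k)^2$ is the crux: being one square rather than a sum, it can be annihilated by exactly one linear constraint.

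For the lower bound I would use the min--max form of the Courant--Fisher Theorem~\ref{v}. Let $W^{*}$ be an $(m-p)$-dimensional subspace of $\mathbb{R}^{m}$ attaining $\beta_p=\min_{W_{m-p}}\max_{\gamma\perp W_{m-p}}\frac{\gamma^{T}L(\Sigma')\gamma}{\gamma^{T}\gamma}$. Embedding $W^{*}$ into $\mathbb{R}^{m+1}$ by appending a zero coordinate and adjoining the vector $e_{m+1}-\star e_k$, which lies outside the embedded copy, produces an $(m+1-p)$-dimensional subspace $\hat{W}\subset\mathbb{R}^{m+1}$. Any nonzero $\hat{\gamma}\perp\hat{W}$ then satisfies $\gamma_{m+1}=\star\gamma_k$ (from orthogonality to $e_{m+1}-\star e_k$) together with $\gamma\perp W^{*}$, and one checks $\gamma\neq 0$ since $\gamma=0$ would force $\gamma_{m+1}=0$ and hence $\hat{\gamma}=0$. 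With $\gamma_{m+1}=\star\gamma_k$ the extra term vanishes while the denominator only grows by $\gamma_k^2$, so
\[
\frac{\hat{\gamma}^{T}L(\Sigma)\hat{\gamma}}{\hat{\gamma}^{T}\hat{\gamma}}=\frac{\gamma^{T}L(\Sigma')\gamma}{\gamma^{T}\gamma+\gamma_k^2}\leqslant\frac{\gamma^{T}L(\Sigma')\gamma}{\gamma^{T}\gamma}\leqslant\beta_p.
\]
Taking the maximum over $\hat{\gamma}\perp\hat{W}$ and then the minimum over all $(m+1-p)$-dimensional subspaces gives $\alpha_p\leqslant\beta_p$, completing the chain $\alpha_p\leqslant\beta_p\leqslant\alpha_{p+1}$.

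The hard part is securing the sharp bound with no additive constant. In Theorem~\ref{Theorem 2.1} the term $\sum_{\Gamma(u_{m+1}\sim u_j)}\gamma_j^2$ contributed a ``$+1$'', and that loss is genuine whenever $u$ has several neighbours, since a single orthogonality constraint cannot cancel a sum of squares spread across several independent coordinates; concretely, Theorem~\ref{Theorem 2.1}'s method pins $\gamma_{m+1}=0$ and thereby \emph{adds} $\gamma_k^2$ to the numerator, whereas the improvement here comes precisely from the alternative choice $\gamma_{m+1}=\star\gamma_k$, which instead adds $\gamma_k^2$ to the denominator. The condition $d(u)=1$ is what collapses the neighbour-sum to the lone square that one constraint removes exactly. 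Lemma~\ref{lemma 2.6} supplies the routine bookkeeping for passing between the $\mathbb{R}^{m}$ and $\mathbb{R}^{m+1}$ optimizations once the last coordinate has been fixed, as in the index-deletion step of Theorem~\ref{Theorem 2.1}. The two places I would verify most carefully are the dimension count $\dim\hat{W}=m+1-p$ (which needs $e_{m+1}-\star e_k\notin W^{*}$, clear from its nonzero last coordinate) and the implication $\hat{\gamma}\neq 0\Rightarrow\gamma\neq 0$ under the constraint, since these are the only points where the argument could slip.
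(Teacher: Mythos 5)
Your proposal is correct and is essentially the paper's own argument: the upper bound is obtained by invoking the vertex-deletion argument of Theorem~\ref{Theorem 2.1}, and the sharp lower bound comes from imposing the single orthogonality constraint $\hat{\gamma}\perp e_{m+1}-\star e_k$ (the paper's $\hat{\gamma}\perp e_m+e_{m+1}$ in the case $\star=-$, $v=u_m$) so that the lone pendant-edge square vanishes while the denominator can only grow. The only difference is presentational — you build an explicit $(m+1-p)$-dimensional test subspace from an optimal $W^{*}$ for $\beta_p$, whereas the paper runs the same constraint through a chain of min--max inequalities using Lemma~\ref{lemma 2.6} — and both hinge identically on $d(u)=1$ collapsing the neighbour sum to one square.
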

\begin{proof} Taking
\begin{math}
  ~\hat{\gamma} = \begin{bmatrix}
           \gamma \\
           \gamma_{m+1} \\
         \end{bmatrix}
\end{math}
$\in \mathbb{R}^{m+1}$ where
\begin{math}
 \gamma=\begin{bmatrix}
            \gamma_1\\
            \gamma_2\\
            \vdots\\
            \gamma_m
 \end{bmatrix}
\end{math}$\in \mathbb{R}^m, \gamma_{m+1} \in \mathbb{R}$ and
$W_l$, $\hat{W}_l$ as $l$-dimensional subspace of $\mathbb{R}^m$, $\mathbb{R}^{m+1}$ respectively and
proceeding as on Theorem \ref{Theorem 2.1} we get $\alpha_{p+1} \geqslant \beta_p$ for $p=1,2,\cdots,m$. Now to prove $\beta_p \geqslant \alpha_p$ for $p=1,2,\cdots,m$ let consider $V(\Sigma)=\{u_1,u_2,\cdots,u_m,u_{m+1}\}$, $u=u_{m+1}$ and $v=u_{m}$ with $u_{m+1}u_m \in E(\Sigma)$. Firstly we consider the case $\star$ is negative then,\\
\begin{equation*}
\begin{split}
\beta_{p}&=\min_{W_{m-p}}~\max\limits_{\substack{\gamma (\ne 0) \in \mathbb{R}^m}\atop{\gamma\perp W_{m-p}}}~\frac{\gamma^TL(\Sigma')\gamma}{\gamma^T\gamma}\\
&=\min_{W_{m-p}}~\max\limits_{\substack{\gamma (\ne 0) \in \mathbb{R}^m}\atop{\gamma\perp W_{m-p}}}~\frac{\sum\limits_{\substack{\Sigma'(u_i \sim u_j)}}(\gamma_i-\sigma(u_iu_j)\gamma_j)^2}{\gamma_1^2+\gamma_2^2+\cdots+\gamma_m^2}\\
&=\min_{\hat{W}_{m-p}}~ \max\limits_{\substack{\hat{\gamma} (\ne 0) \in \mathbb{R}^{m+1}}\atop{\hat{\gamma}\perp \hat{W}_{m-p}}}~\frac{\sum\limits_{\substack{\Sigma(u_i \sim u_j)}}(\gamma_i-\sigma(u_iu_j)\gamma_j)^2-(\gamma_m+\gamma_{m+1})^2}{\gamma_1^2+\gamma_2^2+\cdots+\gamma_m^2}~~~~~~~~~~~~~~[\text{by Lemma \ref{lemma 2.6}]}\\
 &\geqslant \min_{\hat{W}_{m-p}}~\max\limits_{\substack{\hat{\gamma
} (\ne 0) \in \mathbb{R}^{m+1}\\\hat{\gamma}\perp \hat{W}_{m-p}}\atop{\gamma_m=-\gamma_{m+1}}}~\frac{\sum\limits_{\substack{\Sigma(u_i \sim u_j)}}(\gamma_i-\sigma(u_iu_j)\gamma_j)^2-(\gamma_m+\gamma_{m+1})^2}{\gamma_1^2+\gamma_2^2+\cdots+\gamma_m^2}\\
 &= \min_{\hat{W}_{m-p}}~\max\limits_{\substack{\hat{\gamma
} (\ne 0) \in \mathbb{R}^{m+1}\\\hat{\gamma}\perp \hat{W}_{m-p}}\atop{\gamma_m=-\gamma_{m+1}}}~\frac{\sum\limits_{\substack{\Sigma(u_i \sim u_j)}}(\gamma_i-\sigma(u_iu_j)\gamma_j)^2}{\gamma_1^2+\gamma_2^2+\cdots+\gamma_m^2}\\
 &\geqslant \min_{\hat{W}_{m-p}}~\max\limits_{\substack{\hat{\gamma
} (\ne 0) \in \mathbb{R}^{m+1}\\\hat{\gamma}\perp \hat{W}_{m-p}}\atop{\hat{\gamma}\perp e_m+e_{m+1}}}~\frac{\sum\limits_{\substack{\Sigma(u_i \sim u_j)}}(\gamma_i-\sigma(u_iu_j)\gamma_j)^2}{\gamma_1^2+\gamma_2^2+\cdots+\gamma_m^2+\gamma_{m+1}^2}\\
&= \min_{\hat{W}_{m+1-p}}~\max\limits_{\substack{\hat{\gamma
} (\ne 0) \in \mathbb{R}^{m+1}\\ \hat{\gamma}\perp \hat{W}_{m+1-p}}}~\frac{\hat{\gamma}^TL(\Sigma)\hat{\gamma}}{\hat{\gamma}^T\hat{\gamma}}=\alpha_p
\end{split}
\end{equation*}
The result can be proved similarly for $\star$ as positive.
\end{proof}
\begin{cor}{\textnormal{Let $\Gamma=(P_{m+1},\sigma_1)$ and $\Gamma' =(P_{m},\sigma_2)$ be signed paths with $(m+1)$ and $m$ vertices respectively. If $(\alpha_1, \alpha_2, \cdots, \alpha_{m+1})$ and $(\beta_1, \beta_2, \cdots, \beta_m)$ are respective ordered Laplacian spectrum of $\Gamma$ and $\Gamma'$ then,
\[
\alpha_p \leqslant \beta_p \leqslant \alpha_{p+1}~~\text{for}~~p=1,2,....,n
\]}}
\end{cor}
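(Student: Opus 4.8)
The plan is to reduce this corollary to the immediately preceding theorem by using the fact that the Laplacian spectrum of a signed graph is invariant under switching. That theorem already gives the inequality $\alpha_p\leqslant\beta_p\leqslant\alpha_{p+1}$ whenever $\Sigma'$ is obtained from $\Sigma$ by deleting a pendant vertex, but it implicitly keeps the signature $\sigma_1$ on the surviving edges; here $\sigma_1$ and $\sigma_2$ are allowed to be arbitrary and unrelated. I would bridge this gap by replacing both $\Gamma$ and $\Gamma'$ with their all-positive counterparts, which is legitimate once switching invariance is in hand.

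First I would note that switching $\Gamma$ by a function $\theta$ conjugates the Laplacian by the diagonal matrix $S_\theta=\mathrm{diag}(\theta(u_1),\ldots,\theta(u_k))$, that is $L(\Gamma^\theta)=S_\theta L(\Gamma) S_\theta^{-1}$; since $S_\theta$ has entries $\pm 1$ it is orthogonal, so switching-equivalent signed graphs share the same Laplacian spectrum. Next, because $P_k$ is acyclic it contains no negative cycle, so every signed path $(P_k,\sigma)$ is balanced, and a balanced signed graph is switching equivalent to the all-positive signature on the same underlying graph. Consequently the ordered Laplacian spectrum of $\Gamma=(P_{m+1},\sigma_1)$ coincides with that of $(P_{m+1},+)$, and the ordered Laplacian spectrum of $\Gamma'=(P_m,\sigma_2)$ coincides with that of $(P_m,+)$.

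With both signatures normalized to $+$, the path $(P_m,+)$ is exactly the graph obtained from $(P_{m+1},+)$ by deleting one of its two endpoints, each of which is a pendant vertex of degree one. Taking $\Sigma=(P_{m+1},+)$ and $\Sigma'=(P_m,+)$ in the preceding theorem therefore yields $\alpha_p\leqslant\beta_p\leqslant\alpha_{p+1}$ for $p=1,\ldots,m$; since the switching reductions left both spectra unchanged, the same inequalities hold for the original $\Gamma$ and $\Gamma'$.

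I expect the only real work to be the two invariance statements---that switching preserves the Laplacian spectrum, and that every signed path is balanced and hence switches to the all-positive path. Both are standard and short, and once they are recorded the corollary is an immediate specialization of the pendant-vertex theorem, requiring no further estimates.
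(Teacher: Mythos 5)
Your proposal is correct and is exactly the argument the paper intends but leaves unwritten: the corollary is stated without proof as an immediate consequence of the preceding pendant-vertex theorem, and the switching-invariance and balance-of-acyclic-graphs facts you supply are precisely what is needed to pass from arbitrary signatures $\sigma_1,\sigma_2$ to the all-positive paths. No gaps; your write-up just makes explicit the reduction the authors took for granted.
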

\begin{cor}{\textnormal{Let $\Gamma=(K_{1,m},\sigma_1)$ and $\Gamma' =(K_{1,m-1},\sigma_2)$ be signed stars of order $(m+1)$ and $m$ respectively. If $(\alpha_1, \alpha_2, \cdots, \alpha_{m+1})$ and $(\beta_1, \beta_2, \cdots, \beta_m)$ are respective ordered Laplacian spectrum of $\Gamma$ and $\Gamma'$ then,
\[
\alpha_p \leqslant \beta_p \leqslant \alpha_{p+1}~~\text{for}~~p=1,2,....,n
\]}}
\end{cor}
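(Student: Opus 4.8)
The plan is to obtain this corollary as an immediate specialization of the preceding theorem on deletion of a pendant vertex, since a star is the simplest graph whose every leaf is a degree-one vertex. Concretely, I would set $\Sigma=\Gamma=(K_{1,m},\sigma_1)$ and take $u$ to be any one of the $m$ leaves of $K_{1,m}$. Each leaf has degree $1$, so $u$ plays the role of the degree-one vertex in the theorem, its unique neighbour being the center $v$; and deleting $u$ together with its incident edge leaves exactly the star $K_{1,m-1}$ on the center and the remaining $m-1$ leaves. The preceding theorem then delivers $\alpha_p\leqslant\beta_p\leqslant\alpha_{p+1}$ for $p=1,\dots,m$ in one stroke, with $\alpha$ the spectrum of the order-$(m+1)$ star and $\beta$ the spectrum of the order-$m$ star, which is precisely the asserted chain.

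The one point requiring care is the signature. In the preceding theorem the smaller graph $\Sigma'$ inherits the restricted signature $\sigma_1|_{E(\Sigma')}$, whereas the corollary allows the smaller star to carry an arbitrary $\sigma_2$. To bridge this gap I would invoke that every star, being a tree, contains no cycle and is therefore balanced, so both $(K_{1,m},\sigma_1)$ and $(K_{1,m-1},\sigma_2)$ are switching equivalent to their all-positive counterparts. Since $L(\Gamma^\alpha)=S\,L(\Gamma)\,S^{-1}$ for the diagonal $\pm1$ switching matrix $S$, the Laplacian spectrum is a switching invariant, and hence the ordered spectra $(\alpha_1,\dots,\alpha_{m+1})$ and $(\beta_1,\dots,\beta_m)$ depend only on the orders $m+1$ and $m$, not on the chosen signatures. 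In particular the $\beta_p$ computed from $\sigma_2$ agree with those obtained from the inherited signature $\sigma_1|_{E(\Sigma')}$, so the particular choice of $\sigma_2$ is immaterial and the theorem applies verbatim.

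I expect the interlacing inequality itself to be automatic once the theorem is in hand, so the only genuine content — and the step I would state most explicitly — is this reduction of an arbitrary $\sigma_2$ to the inherited signature via balancedness and switching invariance; there is no hard analytic obstacle here, only this bookkeeping. A secondary, purely cosmetic point is that the preceding theorem is phrased with $\Sigma'$ a signed path, but its proof uses nothing beyond $u$ being a pendant vertex with a single neighbour $v$, so the argument applies unchanged to the star, whose leaf-deletion likewise produces a graph of the required form. No explicit computation of the star's Laplacian eigenvalues is needed.
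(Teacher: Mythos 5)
Your proposal is correct and follows exactly the route the paper intends: the paper states this corollary without proof as an immediate consequence of the preceding pendant-vertex theorem, and your application of that theorem to a leaf of $K_{1,m}$ is precisely that deduction. Your additional step -- reducing the arbitrary signature $\sigma_2$ to the inherited one via the balancedness of trees and the switching invariance $L(\Gamma^\alpha)=S\,L(\Gamma)\,S^{-1}$ -- correctly supplies the one detail the paper leaves implicit (and likewise justifies ignoring the theorem's superfluous ``signed path'' phrasing), so nothing is missing.
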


\section{Interlacing properties of eigenvalues of net-Laplacian of signed graphs.}
\label{sec3}
\begin{lem}\label{lem 3.1}{\textnormal{Suppose $(G,\sigma)$ is a signed graph with vertex set \{$u_1,u_2,\cdots,u_m$\}. If $(\alpha_1,\alpha_2, \cdots, \alpha_{m})$ and $(\beta_1, \beta_2, \cdots,\beta_m)$ are the respective ordered Laplacian spectrum and net-Laplacian spectrum of $(G,\sigma)$ then for $p=1,2,\cdots,m$
\[
\beta_p+2~\delta^{-}(G) \leqslant \alpha_p \leqslant \beta_p+2~\Delta^{-}(G)
\]}}
\end{lem}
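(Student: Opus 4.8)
The plan is to exploit the key identity already recorded in the introduction, namely $L(\Gamma)-N(\Gamma)=2D^{-}(\Gamma)$, where $D^{-}(\Gamma)=\operatorname{diag}(d^{-}_{u_1},\dots,d^{-}_{u_m})$. This means $L(G,\sigma)=N(G,\sigma)+2D^{-}(G)$, exhibiting the Laplacian as a sum of the net-Laplacian and a correction matrix that is Hermitian (indeed real symmetric, since it is diagonal). This is precisely the setup for Weyl's Theorem \ref{vi}, so the strategy is to apply that theorem with $\Lambda=N(G,\sigma)$ and $\Theta=2D^{-}(G)$.

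First I would verify that all three matrices involved are real symmetric, so that Weyl's Theorem applies: $N(G,\sigma)=D^{\pm}(G)-A(G,\sigma)$ is symmetric because the adjacency matrix of a signed graph is symmetric and the net-diagonal matrix is diagonal, and $2D^{-}(G)$ is diagonal hence symmetric; their sum is $L(G,\sigma)$. Next I would compute the extreme eigenvalues of the diagonal matrix $\Theta=2D^{-}(G)$, which are simply its extreme diagonal entries: $\beta_1(\Theta)=2\delta^{-}(G)$ and $\beta_m(\Theta)=2\Delta^{-}(G)$, directly from the definitions of $\delta^{-}$ and $\Delta^{-}$ given in the introduction.

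Then I would substitute into Weyl's inequality. Writing $\beta_p$ for the $p$-th net-Laplacian eigenvalue (i.e.\ $\beta_p=\beta_p(\Lambda)$) and $\alpha_p$ for the $p$-th Laplacian eigenvalue (i.e.\ $\alpha_p=\beta_p(\Lambda+\Theta)$), Weyl's Theorem gives
\[
\beta_p(\Lambda)+\beta_1(\Theta)\leqslant \beta_p(\Lambda+\Theta)\leqslant \beta_p(\Lambda)+\beta_m(\Theta),
\]
which upon inserting $\beta_1(\Theta)=2\delta^{-}(G)$ and $\beta_m(\Theta)=2\Delta^{-}(G)$ becomes exactly
\[
\beta_p+2\,\delta^{-}(G)\leqslant \alpha_p\leqslant \beta_p+2\,\Delta^{-}(G),
\]
for each $p=1,2,\dots,m$, which is the claim.

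I do not anticipate a genuine obstacle here: the entire argument is a direct application of Weyl's Theorem once the decomposition $L=N+2D^{-}$ is recognized, mirroring the one-line proof already used for Lemma \ref{lemma 2.3}. The only point requiring a moment's care is the bookkeeping of the ordered spectra — confirming that the $p$-th eigenvalue of the perturbed matrix pairs with the $p$-th eigenvalue of $\Lambda$ and that the extreme eigenvalues of the diagonal perturbation are read off correctly as $2\delta^{-}$ and $2\Delta^{-}$ rather than being confused with the signed-degree extremes. Beyond that, the result is essentially immediate.
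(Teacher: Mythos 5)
Your proposal is correct and follows exactly the route the paper takes: the paper's proof of this lemma is the one-line remark that it follows from Weyl's Theorem, and your decomposition $L=N+2D^{-}$ with extreme eigenvalues $2\delta^{-}(G)$ and $2\Delta^{-}(G)$ of the diagonal perturbation is precisely the intended argument, spelled out in full.
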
 
\begin{proof} Follows directly from Weyl's Theorem \ref{vi}.
\end{proof}
\begin{therm}\label{theorem 3.2}{\textnormal{Let $\Gamma=(G,\sigma)$ be a signed graph with $m$ vertices and $\Gamma'=\Gamma-e$ be the signed graph obtained from $\Gamma$ by removing an edge $e$ such that $\sigma(e)=-$. If $(\alpha_1,\alpha_2,\cdots, \alpha_{m})$ and $(\beta_1, \beta_2,\cdots, \beta_m)$ be the ordered net-Laplacian spectrum of $\Gamma$ and $\Gamma'$ respectively, then\\
\[
\alpha_p \leqslant \beta_p \leqslant \alpha_{p+1}~\text{ for }~p=1,2,\cdots,m ~~\text{ with the convention that }~\alpha_{m+1}=m
\]}}
\end{therm}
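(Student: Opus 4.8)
The plan is to express $N(\Gamma)$ as a rank-one perturbation of $N(\Gamma')$ and then argue with the min-max characterisation (Theorem~\ref{v}), exactly in the spirit of Theorem~\ref{Theorem 2.1}. First I would record the quadratic form of the net-Laplacian: writing $x=(x_1,\dots,x_m)^T$ and using $sdeg(u_i)=\sum_{u_j\sim u_i}\sigma(u_iu_j)$, a direct expansion of $x^T(D^\pm-A)x$ gives
\[
x^TN(\Gamma)x=\sum_{\Gamma(u_i\sim u_j)}\sigma(u_iu_j)(x_i-x_j)^2 .
\]
Since $\Gamma'=\Gamma-e$ with $e=u_su_t$ and $\sigma(e)=-$, the two forms differ only in the term of the deleted edge, so with $z:=e_s-e_t$ we get $x^TN(\Gamma)x-x^TN(\Gamma')x=\sigma(e)(x_s-x_t)^2=-(z^Tx)^2$, i.e.
\[
N(\Gamma')=N(\Gamma)+zz^{T},
\]
a rank-one positive semidefinite perturbation.

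For the interior range $1\le p\le m-1$ I would read off both inequalities from this identity. Since $x^TN(\Gamma')x=x^TN(\Gamma)x+(z^Tx)^2\ge x^TN(\Gamma)x$ for every $x$, monotonicity of the min-max quotient immediately yields $\beta_p\ge\alpha_p$. For $\beta_p\le\alpha_{p+1}$, let $W^\ast$ be an $(m-p-1)$-dimensional subspace attaining the outer minimum for $\alpha_{p+1}$, and put $\widetilde W=W^\ast+\operatorname{span}(z)$, of dimension at most $m-p$. Every $x\perp\widetilde W$ has $z^Tx=0$, hence $x^TN(\Gamma')x=x^TN(\Gamma)x$ while also $x\perp W^\ast$; therefore
\[
\beta_p\le\max_{\substack{x(\ne0)\in\mathbb{R}^m\\ x\perp\widetilde W}}\frac{x^TN(\Gamma')x}{x^Tx}=\max_{\substack{x(\ne0)\in\mathbb{R}^m\\ x\perp\widetilde W}}\frac{x^TN(\Gamma)x}{x^Tx}\le\max_{\substack{x(\ne0)\in\mathbb{R}^m\\ x\perp W^\ast}}\frac{x^TN(\Gamma)x}{x^Tx}=\alpha_{p+1},
\]
where $\widetilde W$ is extended to an exactly $(m-p)$-dimensional space if needed (this only lowers the maximum). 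This establishes $\alpha_p\le\beta_p\le\alpha_{p+1}$ for $p\le m-1$.

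The one step that does not come from the perturbation structure is the boundary case $p=m$, where the convention $\alpha_{m+1}=m$ stands in for a nonexistent top eigenvalue of the $m\times m$ matrix $N(\Gamma)$; the lower bound $\alpha_m\le\beta_m$ is already covered, so it remains to show $\beta_m\le m$. I would bound the Rayleigh quotient by comparison with the unsigned Laplacian: for every $x$,
\[
x^TN(\Gamma')x=\sum_{\Gamma'(u_i\sim u_j)}\sigma(u_iu_j)(x_i-x_j)^2\le\sum_{\Gamma'(u_i\sim u_j)}(x_i-x_j)^2=x^TL(G')x,
\]
where $L(G')$ is the Laplacian of the underlying (unsigned) graph of $\Gamma'$. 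Since the largest Laplacian eigenvalue of any graph on $m$ vertices is at most $m$, this gives $\beta_m\le\lambda_{\max}(L(G'))\le m=\alpha_{m+1}$, completing the proof.

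I expect the main obstacle to be precisely this boundary inequality $\beta_m\le m$: the interior interlacing is the standard rank-one Weyl/Courant--Fisher phenomenon once $N(\Gamma')=N(\Gamma)+zz^T$ is identified, whereas $\beta_m\le m$ needs the extra comparison with $L(G')$ together with the classical bound $\lambda_{\max}(L)\le m$. The only other point requiring care is the index bookkeeping when augmenting $W^\ast$ by $z$, keeping the dimensions $m-p-1$ and $m-p$ consistent with the min-max formula.
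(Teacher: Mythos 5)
Your proof is correct and follows essentially the same route as the paper: both identify $N(\Gamma')=N(\Gamma)+(e_s-e_t)(e_s-e_t)^T$ (the paper writes the quadratic-form difference as the added term $(\gamma_1-\gamma_2)^2$) and then run the Courant--Fisher characterization with the extra orthogonality constraint against $e_s-e_t$ to trade the rank-one perturbation for a shift of one index. The one place you go beyond the paper is the boundary case $p=m$, where you actually justify $\beta_m\leqslant m$ via $x^TN(\Gamma')x\leqslant x^TL(G')x$ and $\lambda_{\max}(L(G'))\leqslant m$; the paper adopts $\alpha_{m+1}=m$ purely as a stated convention without verifying that the inequality persists there.
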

\begin{proof} Suppose $p$ be any integer such that $1 \leqslant p \leqslant m$.
\begin{math}
 \text{Let} ~{\gamma} = \begin{bmatrix}
           \gamma_1 \\
           \gamma_2 \\
           \vdots\\
           \gamma_m
         \end{bmatrix}
\end{math}
$\in \mathbb{R}^{m}$, ${W}_l$ be $l$-dimensional subspace of $\mathbb{R}^m$.
Using Courant-Fisher Theorem \ref{v},\\
\begin{equation*}
\begin{split}
\beta_{p}&=\min_{{W}_{m-p}}~ \max\limits_{\substack{\gamma (\ne 0) \in \mathbb{R}^{m}}\atop{\gamma\perp W_{m-p}}}~\frac{\gamma^TN(\Gamma')\gamma}{\gamma^T\gamma}\\
&=\min_{W_{m-p}} ~\max\limits_{\substack{\gamma (\ne 0) \in \mathbb{R}^{m}}\atop{\gamma\perp W_{m-p}}}~\frac{\sum\limits_{\substack{\Gamma'(u_i \sim u_j)}}(\gamma_i-\sigma(u_iu_j)\gamma_j)^2-2\sum\limits_{\substack{u_i\in V(\Gamma')}}d_i^-\gamma_i^2}{\gamma_1^2+\gamma_2^2+\cdots+\gamma_m^2}  \\ \\
&=\min_{W_{m-p}} ~\max\limits_{\substack{\gamma (\ne 0) \in \mathbb{R}^{m}}\atop{\gamma\perp W_{m-p}}}~\frac{\sum\limits_{\substack{\Gamma(u_i \sim u_j)}}(\gamma_i-\sigma(u_iu_j)\gamma_j)^2-2\sum\limits_{\substack{u_i\in V(\Gamma)}}d_i^-\gamma_i^2+(\gamma_1-\gamma_2)^2}{\gamma_1^2+\gamma_2^2+\cdots+\gamma_m^2}  \\ \\
&\geqslant \min_{W_{m-p}} ~\max\limits_{\substack{\gamma (\ne 0) \in \mathbb{R}^{m}}\atop{\gamma\perp W_{m-p}}}~\frac{\sum\limits_{\substack{\Gamma(u_i \sim u_j)}}(\gamma_i-\sigma(u_iu_j)\gamma_j)^2-2\sum\limits_{\substack{u_i\in V(\Gamma)}}d_i^-\gamma_i^2}{\gamma_1^2+\gamma_2^2+\cdots+\gamma_m^2}=\alpha_p
\end{split}
\end{equation*}
and
\begin{equation*}
\begin{split}
\beta_{p}&=\max_{{W}_{p-1}}~ \min\limits_{\substack{\gamma (\ne 0) \in \mathbb{R}^{m}}\atop{\gamma\perp W_{p-1}}}~\frac{\gamma^TN(\Gamma')\gamma}{\gamma^T\gamma}\\
&=\max_{W_{p-1}} ~\min\limits_{\substack{\gamma (\ne 0) \in \mathbb{R}^{m}}\atop{\gamma\perp W_{p-1}}}~\frac{\sum\limits_{\substack{\Gamma'(u_i \sim u_j)}}(\gamma_i-\sigma(u_iu_j)\gamma_j)^2-2\sum\limits_{\substack{u_i\in V(\Gamma')}}d_i^-\gamma_i^2}{\gamma_1^2+\gamma_2^2+\cdots+\gamma_m^2}  \\ \\
&=\max_{W_{p-1}} ~\min\limits_{\substack{\gamma (\ne 0) \in \mathbb{R}^{m}}\atop{\gamma\perp W_{p-1}}}~\frac{\sum\limits_{\substack{\Gamma(u_i \sim u_j)}}(\gamma_i-\sigma(u_iu_j)\gamma_j)^2-2\sum\limits_{\substack{u_i\in V(\Gamma)}}d_i^-\gamma_i^2+(\gamma_1-\gamma_2)^2}{\gamma_1^2+\gamma_2^2+\cdots+\gamma_m^2}
\end{split}
\end{equation*}
\begin{equation*}
\begin{split}
\beta_p&\leqslant \max_{W_{p-1}}~ \min\limits_{\substack{\gamma (\ne 0) \in \mathbb{R}^{m}\\\gamma\perp W_{p-1}}\atop{\gamma \perp e_1-e_2}}~\frac{\sum\limits_{\substack{\Gamma(u_i \sim u_j)}}(\gamma_i-\sigma(u_iu_j)\gamma_j)^2-2\sum\limits_{\substack{u_i\in V(\Gamma)}}d_i^-\gamma_i^2+(\gamma_1-\gamma_2)^2}{\gamma_1^2+\gamma_2^2+\cdots+\gamma_m^2}\\
&=\max_{W_{p}} ~\min\limits_{\substack{\gamma (\ne 0) \in \mathbb{R}^{m}}\atop{\gamma\perp W_{p}}}~\frac{\sum\limits_{\substack{\Gamma(u_i \sim u_j)}}(\gamma_i-\sigma(u_iu_j)\gamma_j)^2-2\sum\limits_{\substack{u_i\in V(\Gamma)}}d_i^-\gamma_i^2}{\gamma_1^2+\gamma_2^2+\cdots+\gamma_m^2}=\alpha_{p+1} 
\end{split}
\end{equation*}
Thus, $\alpha_p \leqslant \beta_p \leqslant \alpha_{p+1}$ for $p=1,2,\cdots,m$
\end{proof}
\begin{therm}{\textnormal{Let $\Gamma=(G,\sigma)$ be a signed graph with $m$ vertices and $\Gamma'=\Gamma-e$ be the signed graph obtained from $\Gamma$ by removing an edge $e$ such that $\sigma(e)=+$. If $(\alpha_1,\alpha_2,\cdots, \alpha_{m})$ and $(\beta_1, \beta_2,\cdots, \beta_m)$ be the ordered net-Laplacian eigenvalues of $\Gamma$ and $\Gamma'$ respectively, then\\
\[
\alpha_{p-1} \leqslant \beta_{p} \leqslant \alpha_{p}~\text{ for }~p=1,2,\cdots,m~\text{ with the convention that } \alpha_0=-m
\]}}
\end{therm}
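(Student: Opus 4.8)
The plan is to recognise the statement as a rank-one positive semidefinite perturbation and to mirror the proof of Theorem \ref{theorem 3.2} with the roles of $\Gamma$ and $\Gamma'$ interchanged. First I would fix the deleted edge as $e=u_1u_2$ with $\sigma(e)=+$ and an integer $p$, $1\leqslant p\leqslant m$. Working from the same quadratic-form expression for the net-Laplacian used in Theorem \ref{theorem 3.2}, I would note that deleting a \emph{positive} edge leaves every negative degree $d_i^-$ unchanged, so the only effect on $\gamma^T N\gamma$ is the disappearance of the single positive-edge contribution $(\gamma_1-\gamma_2)^2$. This gives
\[
\gamma^T N(\Gamma')\gamma=\gamma^T N(\Gamma)\gamma-(\gamma_1-\gamma_2)^2,
\]
equivalently $N(\Gamma)=N(\Gamma')+(e_1-e_2)(e_1-e_2)^T$. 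Thus $N(\Gamma')$ is obtained from $N(\Gamma)$ by subtracting a rank-one positive semidefinite matrix, which is exactly the configuration of Theorem \ref{theorem 3.2} read in the opposite direction and already suggests the pattern $\beta_p\leqslant\alpha_p\leqslant\beta_{p+1}$.

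For the upper bound $\beta_p\leqslant\alpha_p$ I would apply the min-max form of the Courant--Fisher Theorem \ref{v} to $\beta_p$, substitute the identity above, and simply drop the nonnegative term $(\gamma_1-\gamma_2)^2$ from every numerator. The resulting expression is the min-max characterisation of $\alpha_p$, so $\beta_p\leqslant\alpha_p$ for every $p=1,\dots,m$.

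For the lower bound $\alpha_{p-1}\leqslant\beta_p$ with $p\geqslant 2$ I would again write $\beta_p$ through the min-max form, so that $\beta_p=\min_{W_{m-p}}\max_{\gamma\perp W_{m-p}}\big(\gamma^T N(\Gamma)\gamma-(\gamma_1-\gamma_2)^2\big)/\gamma^T\gamma$. For each fixed $W_{m-p}$ I would shrink the inner maximisation by imposing the extra constraint $\gamma\perp(e_1-e_2)$; this can only decrease the maximum, while on the constrained set the perturbation term vanishes. The admissible vectors are then precisely those orthogonal to $W_{m-p}+\mathrm{span}(e_1-e_2)$, a subspace of dimension at most $m-p+1$, so the inner maximum is bounded below by the min-max value of $N(\Gamma)$ at complementary dimension $m-p+1$, namely $\alpha_{p-1}$. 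Taking the minimum over $W_{m-p}$ then yields $\beta_p\geqslant\alpha_{p-1}$. The boundary case $p=1$ is the convention $\alpha_0=-m$, which I would verify directly: rewriting the net-Laplacian form as a difference of squared differences over positive and negative edges gives $\gamma^T N(\Gamma')\gamma\geqslant-\sum_{\text{negative }e}(\gamma_i-\gamma_j)^2$, and bounding the latter by the largest Laplacian eigenvalue of the negative-edge subgraph (at most $m$) gives $\beta_1\geqslant-m$.

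The step I expect to be the main obstacle is the dimension bookkeeping in the lower bound: one must check carefully that restricting the inner maximum and adjoining $\mathrm{span}(e_1-e_2)$ lands on the complementary dimension $m-p+1$ (hence index $p-1$ rather than $p$), and that the inequality directions are consistent when a maximum is taken over a smaller set. Everything else is the formal mirror of Theorem \ref{theorem 3.2}, and I would use the rank-one interlacing sanity check $\beta_p\leqslant\alpha_p\leqslant\beta_{p+1}$ to confirm that the indices have been matched correctly.
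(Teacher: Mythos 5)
Your proposal is correct and is essentially the argument the paper intends: the paper's proof is literally the single line ``Similar to that of Theorem \ref{theorem 3.2}'', and your mirror-image Courant--Fisher argument built on the rank-one identity $N(\Gamma)=N(\Gamma')+(e_1-e_2)(e_1-e_2)^T$ (drop the nonnegative term for $\beta_p\leqslant\alpha_p$, adjoin $\mathrm{span}(e_1-e_2)$ to the test subspace for $\alpha_{p-1}\leqslant\beta_p$) is exactly that adaptation. Your explicit check of the $p=1$ case against the convention $\alpha_0=-m$ is a welcome addition that the paper omits.
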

\begin{proof} Similar to that of Theorem \ref{theorem 3.2}.
\end{proof}
\begin{therm}\label{thm 3.4}{\textnormal{Let $\Gamma=(G,\sigma)$ be connected signed graph with $(m+1)$ vertices and ordered net-Laplacian eigenvalues $(\alpha_1,\alpha_2,\cdots, \alpha_{m+1})$ and $\Gamma'=(G',\sigma)$ is a signed graph obtained by removing any one vertex of $\Gamma$ and $(\beta_1, \beta_2,\cdots, \beta_m)$ be the ordered net-Laplacian eigenvalues of $\Gamma'$ then\\
\[
\alpha_p-1 \leqslant \beta_p \leqslant \alpha_{p+1}+1~~\text{for}~~ p=1,2,\cdots,m
\]}}
\end{therm}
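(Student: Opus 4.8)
The plan is to mirror the Courant--Fisher argument of Theorem \ref{Theorem 2.1}, but now for the net-Laplacian quadratic form $\gamma^{T}N(\Gamma)\gamma=\sum_{\Gamma(u_i\sim u_j)}(\gamma_i-\sigma(u_iu_j)\gamma_j)^2-2\sum_{u_i\in V(\Gamma)}d_i^{-}\gamma_i^2$ already employed in Theorem \ref{theorem 3.2}. Relabel the deleted vertex as $u_{m+1}$, write $\hat\gamma=(\gamma_1,\dots,\gamma_m,\gamma_{m+1})^{T}\in\mathbb{R}^{m+1}$, and impose the constraint $\hat\gamma\perp e_{m+1}$, i.e. $\gamma_{m+1}=0$, exactly as in Theorem \ref{Theorem 2.1}. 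The first step is to rewrite $\hat\gamma^{T}N(\Gamma)\hat\gamma$ under this constraint in terms of $N(\Gamma')$.

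The crucial computation is as follows. Splitting the edge sum of $\Gamma$ into edges of $\Gamma'$ and edges incident to $u_{m+1}$, the latter contribute $\sum_{u_j\sim u_{m+1}}\gamma_j^2$ once $\gamma_{m+1}=0$. For the diagonal part one must account for the fact that deleting $u_{m+1}$ lowers the negative degree of each neighbour joined to it by a negative edge: writing $N^{+}_{m+1}$ and $N^{-}_{m+1}$ for the neighbours of $u_{m+1}$ along positive and negative edges, one has $d_i^{-}(\Gamma)=d_i^{-}(\Gamma')+1$ for $u_i\in N^{-}_{m+1}$ and $d_i^{-}(\Gamma)=d_i^{-}(\Gamma')$ otherwise. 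Collecting terms gives
\[
\hat\gamma^{T}N(\Gamma)\hat\gamma\,\Big|_{\gamma_{m+1}=0}=\gamma^{T}N(\Gamma')\gamma+\sum_{u_j\in N^{+}_{m+1}}\gamma_j^2-\sum_{u_j\in N^{-}_{m+1}}\gamma_j^2 .
\]
Since $N^{+}_{m+1}$ and $N^{-}_{m+1}$ are disjoint, the correction term $C:=\sum_{N^{+}_{m+1}}\gamma_j^2-\sum_{N^{-}_{m+1}}\gamma_j^2$ satisfies $|C|\le\gamma_1^2+\cdots+\gamma_m^2$, so after dividing by $\|\hat\gamma\|^2=\|\gamma\|^2$ one obtains, whenever $\gamma_{m+1}=0$, the two-sided estimate $\tfrac{\gamma^{T}N(\Gamma')\gamma}{\|\gamma\|^2}-1\le\tfrac{\hat\gamma^{T}N(\Gamma)\hat\gamma}{\|\hat\gamma\|^2}\le\tfrac{\gamma^{T}N(\Gamma')\gamma}{\|\gamma\|^2}+1$.

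With this in hand the two inequalities follow from Courant--Fisher \ref{v} just as in Theorem \ref{Theorem 2.1}. For the upper bound, start from $\alpha_{p+1}=\min_{\hat W_{m-p}}\max_{\hat\gamma\perp\hat W_{m-p}}\tfrac{\hat\gamma^{T}N(\Gamma)\hat\gamma}{\hat\gamma^{T}\hat\gamma}$, add the constraint $\hat\gamma\perp e_{m+1}$ (which can only shrink the inner maximum, hence $\alpha_{p+1}\ge$ the constrained quantity), apply the lower estimate above, identify the constrained subspaces in $\mathbb{R}^{m+1}$ with subspaces in $\mathbb{R}^{m}$ by deleting the $(m+1)$th coordinate, and recognise the result as $\beta_p-1$; this yields $\beta_p\le\alpha_{p+1}+1$. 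For the lower bound, start from $\alpha_{p}=\max_{\hat W_{p-1}}\min_{\hat\gamma\perp\hat W_{p-1}}\tfrac{\hat\gamma^{T}N(\Gamma)\hat\gamma}{\hat\gamma^{T}\hat\gamma}$, again impose $\hat\gamma\perp e_{m+1}$ (which can only raise the inner minimum, hence $\alpha_p\le$ the constrained quantity), apply the upper estimate, and recognise $\beta_p+1$; this gives $\alpha_p-1\le\beta_p$.

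The main obstacle, and the place where this departs from the Laplacian case, is the sign of $C$. In Theorem \ref{Theorem 2.1} the analogous extra term was $\sum_{u_j\sim u_{m+1}}\gamma_j^2\ge0$, which is exactly why one there obtains the sharper $\alpha_p\le\beta_p+1\le\alpha_{p+1}+1$. Here the net-degree diagonal subtracts \emph{twice} the negative-degree contribution, so the negative neighbours enter with the opposite sign and $C$ may be of either sign; only $|C|\le\|\gamma\|^2$ survives, which is precisely what forces the symmetric slack $\alpha_p-1\le\beta_p\le\alpha_{p+1}+1$ rather than a one-sided bound. As an alternative route one can let $B$ be the principal submatrix of $N(\Gamma)$ on the remaining $m$ vertices, use the Cauchy (principal-submatrix) interlacing to get $\alpha_p\le\beta_p(B)\le\alpha_{p+1}$, and then apply Weyl's Theorem \ref{vi} to $N(\Gamma')=B+E$, where $E$ is the diagonal matrix recording the net-degree changes and has entries in $\{-1,0,1\}$; since $\beta_1(E)\ge-1$ and $\beta_m(E)\le1$ this reproduces the same bounds, and makes transparent that connectedness of $\Gamma$ is not actually needed.
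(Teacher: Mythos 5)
Your main argument is essentially the paper's own proof: the same Courant--Fisher restriction to $\hat{\gamma}\perp e_{m+1}$, the same splitting of the net-Laplacian quadratic form into the $\Gamma'$ part plus a correction, and the same bound $|C|\leqslant \|\gamma\|^{2}$ that produces the $\pm 1$ slack (the paper uses this implicitly; your explicit $N^{+}_{m+1}/N^{-}_{m+1}$ bookkeeping just makes the step visible). Your alternative route---Cauchy interlacing for the principal submatrix followed by Weyl's Theorem \ref{vi} applied to the diagonal perturbation with entries in $\{-1,0,1\}$---is also correct and somewhat cleaner, and your remark that connectedness of $\Gamma$ is never used is accurate.
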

\begin{proof} Suppose $p$ be any integer such that $1 \leqslant p \leqslant m$.
\begin{math}
 \text{Let} ~\hat{\gamma} = \begin{bmatrix}
           \gamma \\
           \gamma_{m+1} \\
         \end{bmatrix}
\end{math}
$\in \mathbb{R}^{m+1}, \gamma\in \mathbb{R}^m, \gamma_{m+1} \in \mathbb{R}$ and
$W_l$ and $\hat{W}_l$ be $l$-dimensional subspace of $\mathbb{R}^m$ and $\mathbb{R}^{m+1}$ respectively.
Using Courant-Fisher Theorem \ref{v},\\
\begin{equation*}
\begin{split}
\alpha_{p+1}&=\min_{\hat{W}_{(m+1)-(p+1)}}~ \max\limits_{\substack{\hat{\gamma} (\ne 0) \in \mathbb{R}^{m+1}}\atop{\hat{\gamma}\perp \hat{W}_{(m+1)-(p+1)}}}~\frac{\hat{\gamma}^TN(\Gamma)\hat{\gamma}}{\hat{\gamma}^T\hat{\gamma}}\\
&=\min_{\hat{W}_{m-p}} ~\max\limits_{\substack{\hat{\gamma} (\ne 0) \in \mathbb{R}^{m+1}}\atop{\hat{\gamma}\perp \hat{W}_{m-p}}}~\frac{\sum\limits_{\substack{\Gamma(u_i \sim u_j)}}(\gamma_i-\sigma(u_iu_j)\gamma_j)^2-2\sum\limits_{\substack{u_i\in V(\Gamma)}}d_i^-\gamma_i^2}{\gamma_1^2+\gamma_2^2+\cdots+\gamma_m^2+\gamma_{m+1}^2}  \\ \\
  &\geqslant \min_{\hat{W}_{m-p}} ~\max\limits_{\substack{\hat{\gamma} (\ne 0) \in \mathbb{R}^{m+1}\\\hat{\gamma}\perp \hat{W}_{m-p}}\atop{\hat{\gamma} \perp e_{m+1}}}~\frac{\sum\limits_{\substack{\Gamma'(u_i \sim u_j)}}(\gamma_i-\sigma(u_iu_j)\gamma_j)^2+\sum\limits_{\substack{\Gamma(u_{m+1} \sim u_j)}}(\gamma_{m+1}-\sigma(u_{m+1}u_j)\gamma_j)^2-2\sum\limits_{\substack{u_i\in V(\Gamma)}}d_i^-\gamma_i^2}{\gamma_1^2+\gamma_2^2+\cdots+\gamma_m^2+\gamma_{m+1}^2}
\end{split}
\end{equation*}
As $\hat{\gamma} \perp e_{m+1}$ we can consider $\gamma_{m+1}=0$ and simply delete $(m+1)^{th}$ index of elements of $\hat{W}_{m-p}$.\\
\begin{equation*}
    \begin{split}
\therefore ~\alpha_{p+1}&\geqslant \min_{W_{m-p}}~ \max\limits_{\substack{\gamma (\ne 0) \in \mathbb{R}^{m}\\\atop{\gamma\perp W_{m-p}}}}~\frac{\sum\limits_{\substack{\Gamma'(u_i \sim u_j)}}(\gamma_i-\sigma(u_iu_j)\gamma_j)^2+\sum\limits_{\substack{\Gamma(u_{m+1} \sim u_j)}}\gamma_j^2-2\sum\limits_{\substack{u_i\in V(\Gamma)}\atop{u_i\neq u_{m+1}}}d_i^-\gamma_i^2}{\gamma_1^2+\gamma_2^2+\cdots+\gamma_m^2}  \\
\end{split}
\end{equation*}
\begin{equation*}
\begin{split}
\alpha_{p+1}&\geqslant \min_{W_{m-p}}~ \max\limits_{\substack{\gamma (\ne 0) \in \mathbb{R}^{m}\\\atop{\gamma \perp W_{m-p}}}}~\left[\frac{\sum\limits_{\substack{\Gamma'(u_i \sim u_j)}}(\gamma_i-\sigma(u_iu_j)\gamma_j)^2-2\sum\limits_{\substack{u_i\in V(\Gamma')}}d_i^-\gamma_i^2}{\gamma_1^2+\gamma_2^2+\cdots+\gamma_m^2}-1\right]=\beta_p-1
\end{split}
\end{equation*}\\
and
\begin{equation*}
\begin{split}
\alpha_{p}&=\max_{\hat{W}_{p-1}}~ \min\limits_{\substack{\hat{\gamma} (\ne 0) \in \mathbb{R}^{m+1}}\atop{\hat{\gamma}\perp \hat{W}_{p-1}}}~\frac{\hat{\gamma}^TN(\Gamma)\hat{\gamma}}{\hat{\gamma}^T\hat{\gamma}}  \\
&=\max_{\hat{W}_{p-1}}~ \min\limits_{\substack{\hat{\gamma} (\ne 0) \in \mathbb{R}^{m+1}}\atop{\hat{\gamma}\perp \hat{W}_{p-1}}}~\frac{\sum\limits_{\substack{\Gamma(u_i \sim u_j)}}(\gamma_i-\sigma(u_iu_j)\gamma_j)^2-2\sum\limits_{\substack{u_i\in V(\Gamma)}}d_i^-\gamma_i^2}{\gamma_1^2+\gamma_2^2+\cdots+\gamma_m^2+\gamma_{m+1}^2}\\
&\leqslant \max_{\hat{W}_{p-1}}~ \min\limits_{\substack{\hat{\gamma} (\ne 0) \in \mathbb{R}^{m+1}\\\hat{\gamma}\perp \hat{W}_{p-1}}\atop{\hat{\gamma} \perp e_{m+1}}}~\frac{\sum\limits_{\substack{\Gamma'(u_i \sim u_j)}}(\gamma_i-\sigma(u_iu_j)\gamma_j)^2+\sum\limits_{\substack{\Gamma(u_{m+1} \sim u_j)}}(\gamma_{m+1}-\sigma(u_{m+1}u_j)\gamma_j)^2-2\sum\limits_{\substack{u_i\in V(\Gamma)}}d_i^-\gamma_i^2}{\gamma_1^2+\gamma_2^2+\cdots+\gamma_m^2+\gamma_{m+1}^2}  \\
&= \max_{W_{p-1}}~ \min\limits_{\substack{\gamma (\ne 0) \in \mathbb{R}^{m}\\\atop{\gamma \perp W_{p-1}}}}~\frac{\sum\limits_{\substack{\Gamma'(u_i \sim u_j)}}(\gamma_i-\sigma(u_iu_j)\gamma_j)^2+\sum\limits_{\substack{\Gamma(u_{m+1} \sim u_j)}}\gamma_j^2-2\sum\limits_{\substack{u_i\in V(\Gamma)}\atop{u_i\neq u_{m+1}}}d_i^-\gamma_i^2}{\gamma_1^2+\gamma_2^2+\cdots+\gamma_m^2}\\
&\leqslant \max_{W_{p-1}}~ \min\limits_{\substack{\gamma (\ne 0) \in \mathbb{R}^{m}\\\atop{\gamma \perp W_{p-1}}}}~\Bigg[\frac{\sum\limits_{\substack{\Gamma'(u_i \sim u_j)}}(\gamma_i-\sigma(u_iu_j)\gamma_j)^2-2\sum\limits_{\substack{u_i\in V(\Gamma')}}d_i^-\gamma_i^2}{\gamma_1^2+\gamma_2^2+\cdots+\gamma_m^2}+1\Bigg]=\beta_p+1
\end{split}
\end{equation*}
Thus, $\alpha_p-1\leqslant \beta_p\leqslant \alpha_{p+1}+1$ for $p=1,2,\cdots,m$.
\end{proof}
\begin{cor}{\textnormal{Let $\Gamma=(G,\sigma)$ be a signed graph of order $m+1$ and $\Gamma'=\Gamma-v$ for a vertex $v\in V(\Gamma)$ such that $d_v^-=0$. If $(\alpha_1,\alpha_2,\cdots,\alpha_{m+1})$ and $(\beta_1,\beta_2,\cdots,\beta_m)$ are the ordered net-Laplacian spectrum of $\Gamma$ and $\Gamma'$ respectively then
\[
\alpha_p-1\leqslant \beta_p \leqslant \alpha_{p+1} \text{ for } p=1,2,\cdots,m
\]}}
\end{cor}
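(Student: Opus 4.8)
The plan is to re-run the variational argument of Theorem~\ref{thm 3.4} and exploit the hypothesis $d_v^-=0$ to sharpen the upper estimate, while leaving the lower estimate intact. Writing $v=u_{m+1}$, the decisive structural observation is that $d_v^-=0$ forces every edge incident to $v$ to be positive. Consequently, deleting $v$ removes no negative edge from any surviving vertex, so the negative degree of each $u_i\in V(\Gamma')$ is the same whether computed in $\Gamma$ or in $\Gamma'$; that is, $\sum_{u_i\in V(\Gamma),\,u_i\neq u_{m+1}}d_i^-\gamma_i^2=\sum_{u_i\in V(\Gamma')}d_i^-\gamma_i^2$.

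First I would establish the upper bound $\beta_p\leqslant\alpha_{p+1}$. Following Theorem~\ref{thm 3.4}, I apply the first Courant--Fisher characterization to $N(\Gamma)$, impose on the test vectors $\hat{\gamma}$ the extra constraint $\hat{\gamma}\perp e_{m+1}$ (so that $\gamma_{m+1}=0$), and split the edge sum into the part lying inside $\Gamma'$ and the part meeting $u_{m+1}$. By the observation above the negative-degree term collapses to the one belonging to $N(\Gamma')$, and the only surplus in the numerator is the edge contribution $\sum_{\Gamma(u_{m+1}\sim u_j)}\gamma_j^2$. Since all these edges are positive this surplus is a sum of squares, hence nonnegative, and can simply be discarded rather than estimated from below by $-\sum_j\gamma_j^2$. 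Dropping it leaves exactly the Rayleigh quotient for $N(\Gamma')$ over $W_{m-p}$, giving $\alpha_{p+1}\geqslant\beta_p$. This is precisely where the general bound $\beta_p\leqslant\alpha_{p+1}+1$ of Theorem~\ref{thm 3.4} improves: the extra $+1$ there originated from the negative edges at $v$, whose combined contribution is bounded below only by $-\sum_j\gamma_j^2$, and under $d_v^-=0$ that contribution is absent.

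For the lower bound $\alpha_p-1\leqslant\beta_p$ I would reproduce the second half of the proof of Theorem~\ref{thm 3.4} essentially verbatim. Using the max--min Courant--Fisher characterization, the same restriction $\gamma_{m+1}=0$, and the identity of negative degrees noted above, the quotient for $N(\Gamma)$ is bounded above by the quotient for $N(\Gamma')$ plus the term $\sum_{\Gamma(u_{m+1}\sim u_j)}\gamma_j^2/(\gamma_1^2+\cdots+\gamma_m^2)\leqslant 1$, which yields $\alpha_p\leqslant\beta_p+1$. Here the positive edges at $v$ may genuinely drive this surplus up to $1$, so the lower estimate cannot be tightened and matches the statement of the corollary.

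The main obstacle is the negative-degree bookkeeping: one must track carefully how $D^\pm$ changes when $v$ is deleted and confirm that the hypothesis $d_v^-=0$ exactly annihilates the negative-edge contribution responsible for the extra $+1$ in Theorem~\ref{thm 3.4}, while leaving the positive-edge contribution---and hence the lower bound---untouched.
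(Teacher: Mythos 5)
Your argument is correct and is essentially the paper's own: the paper likewise derives the corollary by re-running Theorem~\ref{thm 3.4} with the observation that $d_v^-=0$ makes $\sum_{u_i\in V(\Gamma),\,u_i\neq v}d_i^-\gamma_i^2=\sum_{u_i\in V(\Gamma')}d_i^-\gamma_i^2$, so the surplus $\sum_{\Gamma(v\sim u_j)}\gamma_j^2$ is nonnegative and can be discarded in the min--max half (sharpening $\beta_p\leqslant\alpha_{p+1}+1$ to $\beta_p\leqslant\alpha_{p+1}$) while still being bounded by $\sum_j\gamma_j^2$ in the max--min half (leaving $\alpha_p\leqslant\beta_p+1$). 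You have merely spelled out the bookkeeping that the paper leaves implicit.
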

\begin{proof} Proof follows from Theorem \ref{thm 3.4} since $\sum\limits_{\substack{u_i\in V(\Gamma)}\atop{u_i \neq v}}d_i^-\gamma_i^2=\sum\limits_{\substack{u_i\in V(\Gamma')}}d_i^-\gamma_i^2$.
\end{proof}
\begin{cor}{\textnormal{Let $\Gamma=(G,\sigma)$ be a signed graph of order $m+1$ and $\Gamma'=\Gamma-v$ for a vertex $v\in V(\Gamma)$ such that $d_v^+=0$. If $(\alpha_1,\alpha_2,\cdots,\alpha_{m+1})$ and $(\beta_1,\beta_2,\cdots,\beta_m)$ are the ordered net-Laplacian spectrum of $\Gamma$ and $\Gamma'$ respectively then
\[
\alpha_p\leqslant \beta_p \leqslant \alpha_{p+1}+1 \text{ for } p=1,2,\cdots,m
\]}}
\end{cor}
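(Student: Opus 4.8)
The plan is to reuse the proof of Theorem~\ref{thm 3.4} with $v=u_{m+1}$, the only new ingredient being that $d_v^+=0$ forces every edge at $v$ to be negative, producing a cancellation that tightens the lower bound. Write $\tilde d_i$ for the negative degree of $u_i$ in $\Gamma'$. Since $d_{u_{m+1}}^+=0$, deleting $u_{m+1}$ lowers the negative degree of each of its neighbours by exactly one and leaves all others unchanged, so $\tilde d_i=d_i^--1$ when $u_i\sim u_{m+1}$ and $\tilde d_i=d_i^-$ otherwise. This gives the identity
\[
2\!\!\sum_{\substack{u_i\in V(\Gamma)\\ u_i\neq u_{m+1}}}\!\! d_i^-\gamma_i^2 = 2\!\!\sum_{u_i\in V(\Gamma')}\!\! \tilde d_i\,\gamma_i^2 + 2\!\!\sum_{\Gamma(u_{m+1}\sim u_j)}\!\!\gamma_j^2 .
\]

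I would then run the two Courant--Fischer estimates exactly as in Theorem~\ref{thm 3.4}. In each, imposing $\hat\gamma\perp e_{m+1}$ lets me set $\gamma_{m+1}=0$, which collapses the edge contributions at $u_{m+1}$ to $\sum_{\Gamma(u_{m+1}\sim u_j)}\gamma_j^2$, so that the numerator reads
\[
\sum_{\Gamma'(u_i\sim u_j)}\!\!(\gamma_i-\sigma(u_iu_j)\gamma_j)^2 + \!\!\sum_{\Gamma(u_{m+1}\sim u_j)}\!\!\gamma_j^2 - 2\!\!\sum_{\substack{u_i\in V(\Gamma)\\ u_i\neq u_{m+1}}}\!\! d_i^-\gamma_i^2 .
\]
Substituting the identity, the two edge-sums combine as $\sum_{\Gamma(u_{m+1}\sim u_j)}\gamma_j^2-2\sum_{\Gamma(u_{m+1}\sim u_j)}\gamma_j^2$, so the numerator equals $\gamma^TN(\Gamma')\gamma-\sum_{\Gamma(u_{m+1}\sim u_j)}\gamma_j^2$.

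Both bounds fall out of this single expression. For the lower bound, $-\sum_{\Gamma(u_{m+1}\sim u_j)}\gamma_j^2\leqslant 0$ makes the numerator $\leqslant\gamma^TN(\Gamma')\gamma$, so the max-min over $W_{p-1}$ yields $\alpha_p\leqslant\beta_p$. For the upper bound, $-\sum_{\Gamma(u_{m+1}\sim u_j)}\gamma_j^2\geqslant-\gamma^T\gamma$ makes the numerator $\geqslant\gamma^TN(\Gamma')\gamma-\gamma^T\gamma$, so the min-max over $W_{m-p}$ yields $\alpha_{p+1}\geqslant\beta_p-1$, i.e.\ $\beta_p\leqslant\alpha_{p+1}+1$.

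The only delicate point is the sign bookkeeping. In the unrestricted setting of Theorem~\ref{thm 3.4} the surviving edge-term is $\sum_{\text{positive nbrs}}\gamma_j^2-\sum_{\text{negative nbrs}}\gamma_j^2$, which can be controlled only by $\gamma^T\gamma$ and therefore costs the extra $+1$ in the lower bound. The hypothesis $d_v^+=0$ empties the positive-neighbour sum, turning this term into $-\sum_{\Gamma(u_{m+1}\sim u_j)}\gamma_j^2\leqslant 0$ and so removing the $+1$; note that it cannot improve the upper bound, since $\sum_{\Gamma(u_{m+1}\sim u_j)}\gamma_j^2$ may be as large as $\gamma^T\gamma$. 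Combining the two halves gives $\alpha_p\leqslant\beta_p\leqslant\alpha_{p+1}+1$ for $p=1,\dots,m$, as required.
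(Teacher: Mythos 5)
Your proof is correct and follows essentially the same route as the paper: substitute the hypothesis $d_v^{+}=0$ into the argument of Theorem~\ref{thm 3.4} and track how each neighbour's negative degree drops by one when $v$ is deleted. In fact your bookkeeping is more careful than the paper's one-line justification, whose displayed identity drops the leftover term $-\sum_{\Gamma(v\sim u_j)}\gamma_j^2$ (and, taken literally, would yield the stronger bound $\beta_p\leqslant\alpha_{p+1}$); your version correctly retains that term, bounds it between $-\gamma^{T}\gamma$ and $0$, and thereby recovers exactly the asymmetric inequality $\alpha_p\leqslant\beta_p\leqslant\alpha_{p+1}+1$.
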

\begin{proof} Proof follows from Theorem \ref{thm 3.4} since $\sum\limits_{\substack{\Gamma(v \sim u_j)}}\gamma_j^2-2\sum\limits_{\substack{u_i\in V(\Gamma)}\atop{u_i \neq v}}d_i^-\gamma_i^2=-2\sum\limits_{\substack{u_i\in V(\Gamma')}}d_i^-\gamma_i^2$.
\end{proof}
\begin{cor}{\textnormal{Let $\Gamma=(G,\sigma)$ be a $(m,k)$ co-regular signed graph of order $m+1$ where $d_i^-=s$ for all $i=1,2,\cdots,m+1$ and $\Gamma'=\Gamma-v$ for a vertex $v\in V(\Gamma)$. If $(\alpha_1,\alpha_2,\cdots,\alpha_{m+1})$ and $(\beta_1,\beta_2,\cdots,\beta_m)$ are the ordered net-Laplacian spectrum of $\Gamma$ and $\Gamma'$ respectively then
\[
\beta_p+2s\leqslant \alpha_p\leqslant \mu_p+(1-2s)\leqslant \alpha_{p+1}\leqslant\beta_{p+1}+2s \text{ for } p=1,2,\cdots,m
\]
where $(\mu_1,\mu_2,\cdots,\mu_m)$ is the ordered Laplacian spectrum of $\Gamma'$.}}
\end{cor}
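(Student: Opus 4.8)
The plan is to translate everything into Laplacian language, where the co-regularity of $\Gamma$ makes the net-Laplacian and the Laplacian differ only by a scalar matrix. Since $d^-_i=s$ for every vertex of $\Gamma$, we have $D^-(\Gamma)=sI_{m+1}$, so that $L(\Gamma)=N(\Gamma)+2sI_{m+1}$; consequently the ordered Laplacian spectrum of $\Gamma$ is exactly $(\alpha_1+2s,\ldots,\alpha_{m+1}+2s)$. Moreover, because the underlying graph is $m$-regular on $m+1$ vertices it must be the complete graph $K_{m+1}$, so the deleted vertex $v$ is adjacent to every other vertex. This is precisely the hypothesis of the Corollary to Theorem~\ref{Theorem 2.1}, which I would apply to the Laplacians of $\Gamma$ and $\Gamma'$.

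For the two middle inequalities I would feed the shifted Laplacian spectrum $(\alpha_p+2s)$ of $\Gamma$ and the Laplacian spectrum $(\mu_p)$ of $\Gamma'$ into that corollary, obtaining
\[
(\alpha_p+2s)\leqslant \mu_p+1\leqslant (\alpha_{p+1}+2s)\qquad (p=1,\ldots,m).
\]
Subtracting $2s$ throughout gives at once $\alpha_p\leqslant \mu_p+(1-2s)\leqslant \alpha_{p+1}$, the central part of the asserted chain. Equivalently, one may observe that the $m\times m$ principal submatrix of $N(\Gamma)$ obtained by deleting the row and column of $v$ equals $L(\Gamma')+(1-2s)I_m$, since each surviving vertex has degree $m-1$ and net degree $k=m-2s$ in $\Gamma'$, and then invoke the interlacing of the eigenvalues of a principal submatrix; this records the same two inequalities.

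For the two outer inequalities I would pass between the net-Laplacian and the Laplacian of the reduced graph $\Gamma'$ using Lemma~\ref{lem 3.1}, which supplies $\beta_p+2\delta^-(\Gamma')\leqslant \mu_p\leqslant \beta_p+2\Delta^-(\Gamma')$. Deleting $v$ lowers by one the negative degree of each neighbour joined to $v$ by a negative edge and leaves all other negative degrees equal to $s$, so that generically $\delta^-(\Gamma')=s-1$ and $\Delta^-(\Gamma')=s$. Substituting these two extreme values and combining the resulting bounds on $\mu_p$ with the middle inequalities already obtained is intended to drive the endpoints of the chain out to $\beta_p+2s$ on the left and $\beta_{p+1}+2s$ on the right.

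The step I expect to be the main obstacle is exactly this matching of the endpoints. Lemma~\ref{lem 3.1} separates $\mu_p$ and $\beta_p$ by as much as $2\Delta^-(\Gamma')=2s$, so the two bounds it produces point in opposite directions, and aligning them with the middle sandwich so as to land precisely on $\beta_p+2s$ demands careful bookkeeping of which neighbours of $v$ carry negative edges. The boundary situations are the most delicate: when all edges at $v$ share one sign, forcing $\delta^-(\Gamma')=\Delta^-(\Gamma')$, and the top of the range $p=m$, where the rightmost term $\beta_{m+1}+2s$ appeals to a convention rather than to an honest eigenvalue of $\Gamma'$. I anticipate that the sign and degree accounting at these endpoints, rather than any single inequality, is the crux of the proof.
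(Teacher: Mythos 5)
Your proof of the central inequalities $\alpha_p\leqslant\mu_p+(1-2s)\leqslant\alpha_{p+1}$ is correct and takes a genuinely different route from the paper's. The paper re-runs the Courant--Fisher computation of Theorem~\ref{thm 3.4}, inserting $\sum_{j=1}^{m}\gamma_j^2$ (from completeness) and $-2s\sum_{i=1}^{m}\gamma_i^2$ (from co-regularity) by hand; you instead observe that $L(\Gamma)=N(\Gamma)+2sI_{m+1}$ and apply the corollary following Theorem~\ref{Theorem 2.1} to the dominating vertex $v$, or equivalently note that deleting the row and column of $v$ from $N(\Gamma)$ leaves exactly the principal submatrix $L(\Gamma')+(1-2s)I_m$ and invoke Cauchy interlacing. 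Your version is shorter, makes the role of each hypothesis transparent, and yields the same middle sandwich.

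The outer inequalities are a genuine gap, and you correctly sense that they are the crux --- but the obstruction is not bookkeeping: they are false, so no completion of your plan (or of the paper's) exists. Lemma~\ref{lem 3.1} applied to $\Gamma'$ gives $\beta_p+2\delta^-(\Gamma')\leqslant\mu_p\leqslant\beta_p+2\Delta^-(\Gamma')$ with $s-1\leqslant\delta^-(\Gamma')\leqslant\Delta^-(\Gamma')\leqslant s$, and feeding these into the middle sandwich yields only $\beta_p-1\leqslant\alpha_{p+1}$ and $\alpha_p\leqslant\beta_p+1$; the latter (which is also Theorem~\ref{thm 3.4} read directly) already contradicts $\beta_p+2s\leqslant\alpha_p$ whenever $s\geqslant 1$. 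Concretely, for $\Gamma=K_3$ with all edges negative one has $s=2$, net-Laplacian spectra $(-3,-3,0)$ for $\Gamma$ and $(-2,0)$ for $\Gamma'$, and $\beta_1+2s=2\not\leqslant-3=\alpha_1$; for all-positive $K_3$ (so $s=0$) the right-hand bound fails since $\alpha_2=3\not\leqslant 2=\beta_2+2s$. (For $p=m$ the term $\beta_{m+1}$ is not even defined.) The paper's own proof establishes only the middle inequalities and then asserts that combining them with Lemma~\ref{lem 3.1} finishes the argument; it does not, so the difficulty you flagged is real and is a defect of the statement itself rather than of your approach.
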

\begin{proof} Let $v=u_{m+1}$. So,  ${\sum\limits_{\substack{\Gamma(u_{m+1} \sim u_j)}}(\gamma_{m+1}-\sigma(u_{m+1}u_j)\gamma_j)^2}={\displaystyle \sum_{j=1}^{m}(\gamma_{m+1}-\sigma(u_{m+1}u_j)\gamma_j)^2}$. Thus
\begin{equation*}
\begin{split}
\alpha_{p+1}&\geqslant \min_{W_{m-p}}~ \max\limits_{\substack{\gamma (\ne 0) \in \mathbb{R}^{m}\\\atop{\gamma\perp W_{m-p}}}}~\frac{\sum\limits_{\substack{\Gamma'(u_i \sim u_j)}}(\gamma_i-\sigma(u_iu_j)\gamma_j)^2+\displaystyle \sum_{j=1}^{m}\gamma_j^2-2s\sum_{i=1}^{m}\gamma_i^2}{\gamma_1^2+\gamma_2^2+\cdots+\gamma_m^2}  \\ \\
&= \min_{W_{m-p}} ~\max\limits_{\substack{\gamma (\ne 0) \in \mathbb{R}^{m}\\\atop{\gamma \perp W_{m-p}}}}~\left[\frac{\sum\limits_{\substack{\Gamma'(u_i \sim u_j)}}(\gamma_i-\sigma(u_iu_j)\gamma_j)^2}{\gamma_1^2+\gamma_2^2+\cdots+\gamma_m^2}+(1-2s)\right]=\mu_p+(1-2s)
\end{split}
\end{equation*}
and
\begin{equation*}
\begin{split}
\alpha_p&\leqslant \max_{W_{p-1}}~ \min\limits_{\substack{\gamma (\ne 0) \in \mathbb{R}^{m}\\\atop{\gamma\perp W_{p-1}}}}~\frac{\sum\limits_{\substack{\Gamma'(u_i \sim u_j)}}(\gamma_i-\sigma(u_iu_j)\gamma_j)^2+\displaystyle \sum_{j=1}^{m}\gamma_j^2-2s\sum_{i=1}^{m}\gamma_i^2}{\gamma_1^2+\gamma_2^2+\cdots+\gamma_m^2}  \\ \\
&= \max_{W_{p-1}} ~\min\limits_{\substack{\gamma (\ne 0) \in \mathbb{R}^{m}\\\atop{\gamma \perp W_{p-1}}}}~\left[\frac{\sum\limits_{\substack{\Gamma'(u_i \sim u_j)}}(\gamma_i-\sigma(u_iu_j)\gamma_j)^2}{\gamma_1^2+\gamma_2^2+\cdots+\gamma_m^2}+(1-2s)\right]=\mu_p+(1-2s)
\end{split}
\end{equation*}
Combining the above two inequalities and using Lemma \ref{lem 3.1} will give the required result.
\end{proof}
\section{Normalized net-Laplacian of a signed graph}
\label{sec4}
For a signed graph $\Gamma$ with vertex set $\{u_1,u_2,\cdots,u_m\}$, a net-Laplacian matrix is normalized as,
\[
\Bar{N}(\Gamma)=D(\Gamma)^{-\frac{1}{2}}N(\Gamma)D(\Gamma)^{-\frac{1}{2}}
\]
where,
\[
(D(\Gamma)^{-\frac{1}{2}})_{ij}=\begin{cases}
    0 &\text{ if } i\neq j\\
    \frac{1}{\sqrt{d_{u_{_i}}}} &\text{ if } i=j \text{ and } d_{u_{_i}}\neq 0\\
    0 &\text{ if } i=j \text{ and } d_{u_{_i}}= 0
\end{cases}
\]
If $V(\Gamma)=\{u_1,u_2,\cdots,u_m\}$ and $\gamma=\begin{bmatrix}
    \gamma_1\\
    \gamma_2\\
    \vdots\\
    \gamma_m
\end{bmatrix}\in \mathbb{R}^m$ then,
\begin{equation*}
\frac{\gamma^TN(\Gamma)\gamma}{\gamma^T\gamma}=\frac{\sum\limits_{\substack{\Gamma(v_i\sim v_j)}}(\gamma_i-\sigma(v_iv_j)\gamma_j)^2-2\sum\limits_{\substack{v_i\in V(\Gamma)}}d_i^-\gamma_i^2}{\displaystyle \sum_{i=1}^{n}\gamma_i^2}
\end{equation*}
Let $y=D(\Gamma)^{\frac{1}{2}}\gamma$ then,
\begin{align}
\frac{y^TN(\Gamma)y}{y^Ty}&=\frac{\left(D(\Gamma)^{\frac{1}{2}}\gamma\right)^T\left(D(\Gamma)^{-\frac{1}{2}}N(\Gamma)D(\Gamma)^{-\frac{1}{2}}\right)\left(D(\Gamma)^{-\frac{1}{2}}\gamma\right)}{\left(D(\Gamma)^{\frac{1}{2}}\gamma\right)^T\left(D(\Gamma)^{\frac{1}{2}}\gamma\right)} \nonumber \\
&=\frac{\gamma^TN(\Gamma)\gamma}{\gamma^TD\gamma} \nonumber\\
&=\frac{\sum\limits_{\substack{\Gamma(u_i\sim u_j)}}(\gamma_i-\sigma(u_iu_j)\gamma_j)^2-2\sum\limits_{\substack{u_i\in V(\Gamma)}}d_i^-\gamma_i^2}{\displaystyle \sum_{i=1}^{m}\gamma_i^2d_i}
\end{align}
If $(\mu_1,\mu_2,\cdots,\mu_m)$ is the ordered normalized net-Laplacian spectrum of a signed graph $\Gamma=(G,\sigma)$ with vertex set $\{u_1,u_2,\cdots,u_m\}$ then from~\cite{ref4}, it follows that \[
\mu_p\leqslant \max_{\gamma \neq 0}~\frac{\sum\limits_{\substack{\Gamma(u_i\sim u_j)}}(\gamma_i-\sigma(u_iu_j)\gamma_j)^2-2\sum\limits_{\substack{u_i\in V(\Gamma)}}d_i^-\gamma_i^2}{\displaystyle \sum_{i=1}^{m}\gamma_i^2d_i}\leqslant \max_{\gamma \neq 0}~\frac{\sum\limits_{\substack{\Gamma(u_i\sim u_j)}}(\gamma_i-\sigma(u_iu_j)\gamma_j)^2}{\displaystyle \sum_{i=1}^{m}\gamma_i^2d_i} \leqslant 2
\]
Equality holds for $p=m$ when $\gamma_i=-\sigma(u_iu_j)\gamma_j$ for every edge $u_iu_j$ in $G$ and $d_i^-=0$ for all $i=1,2,\cdots,m$. As $\gamma \neq 0$, $G$ has a bipartite connected component and $\sigma=+$. Also
\[
\mu_p\geqslant \min_{\gamma \neq 0}\frac{\sum\limits_{\substack{\Gamma(u_i\sim u_j)}}(\gamma_i-\sigma(u_iu_j)\gamma_j)^2-2\sum\limits_{\substack{u_i\in V(\Gamma)}}d_i^-\gamma_i^2}{\displaystyle \sum_{i=1}^{m}\gamma_i^2d_i}\geqslant \min_{\gamma \neq 0}\frac{-2\sum\limits_{\substack{u_i\in V(\Gamma)}}d_i^-\gamma_i^2}{\displaystyle \sum_{i=1}^{m}\gamma_i^2d_i}\geqslant -2
\]
and equality holds for $p=1$ when $\gamma_i=\sigma(u_iu_j)\gamma_j$ for every edge $u_iu_j$ in $G$ and $d_i^-=d_i$ for all $i=1,2,\cdots,m$. As $\gamma \neq 0$, $G$ has a bipartite connected component and $\sigma=-$.
\begin{therm}\textnormal{Let $\Gamma$ be a signed graph without isolated vertices of order $m$ and let $\Gamma'=\Gamma-e$ be the signed graph obtained by removing an edge $e=uv$ such that $\sigma(e)=-$. If $(\alpha_1,\alpha_2,\cdots,\alpha_m)$ and $(\beta_1,\beta_2,\cdots,\beta_m)$ are the ordered spectrum of $\Bar{N}(\Gamma)$ and $\Bar{N}(\Gamma')$ respectively, then}
    \[
    \alpha_p\leqslant \beta_p\leqslant \alpha_{p+2}~ ; ~\text{ for } p=1,2,\cdots,m
    \]
\end{therm}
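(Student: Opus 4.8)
The plan is to recast the eigenvalues of $\Bar{N}(\Gamma)$ and $\Bar{N}(\Gamma')$ as critical values of the \emph{generalized} Rayleigh quotient attached to the normalization, exactly as in the displayed identity preceding the statement, and then run the Courant--Fisher argument of Theorems \ref{theorem 3.2} and \ref{thm 3.4}. Concretely, writing
\[
R_\Gamma(\gamma)=\frac{\gamma^{T}N(\Gamma)\gamma}{\gamma^{T}D(\Gamma)\gamma},\qquad R_{\Gamma'}(\gamma)=\frac{\gamma^{T}N(\Gamma')\gamma}{\gamma^{T}D(\Gamma')\gamma},
\]
I would represent $\alpha_p$ via Courant--Fisher \ref{v} using orthogonality in the $D(\Gamma)$-inner product, and $\beta_p$ using orthogonality in the $D(\Gamma')$-inner product (both positive definite since $\Gamma$ has no isolated vertices; the degenerate case where an endpoint of $e$ becomes isolated in $\Gamma'$ I would treat separately through the $0$-convention in the definition of $\Bar N$). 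I would also fix the convention $\alpha_{m+1}=\alpha_{m+2}=2$, matching the global bound $\mu_p\le2$ proved just above.

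First I would record how the two quadratic forms move when the negative edge $e=uv$ is deleted. Taking $u=u_1,\ v=u_2$ and using the net-Laplacian quadratic form from the text, deletion removes exactly the summand $(\gamma_1+\gamma_2)^2$ from the adjacency sum and lowers each of $d_1^-,d_2^-$ by one, giving
\[
\gamma^{T}N(\Gamma')\gamma=\gamma^{T}N(\Gamma)\gamma+(\gamma_1-\gamma_2)^2,\qquad \gamma^{T}D(\Gamma')\gamma=\gamma^{T}D(\Gamma)\gamma-\gamma_1^2-\gamma_2^2 .
\]
The decisive point is that on the codimension-two subspace $U=\{\gamma:\gamma_1=\gamma_2=0\}$ \emph{both} corrections vanish at once, so there $R_{\Gamma'}(\gamma)=R_\Gamma(\gamma)$ and, in addition, the $D(\Gamma)$- and $D(\Gamma')$-inner products agree. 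Hence the two pencils restrict to one and the same $(m-2)$-dimensional eigenvalue problem on $U$, and this shared restriction is precisely what produces the index shift of two.

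For the upper bound $\beta_p\le\alpha_{p+2}$ I would mimic the ``$\perp e_{m+1}$'' step of Theorem \ref{thm 3.4}, but impose the \emph{two} constraints $\gamma\perp e_1$ and $\gamma\perp e_2$. Starting from
\[
\beta_p=\min_{\dim W=m-p}\ \max_{\substack{\gamma(\neq0)\\ \gamma\,\perp_{D'}\,W}}\ R_{\Gamma'}(\gamma),
\]
where $\perp_{D'}$ denotes $D(\Gamma')$-orthogonality, restricting the inner maximum to vectors also orthogonal to $e_1,e_2$ can only lower it; on that restricted set $\gamma\in U$, so $R_{\Gamma'}=R_\Gamma$ and $\perp_{D'}$ collapses to ordinary $D(\Gamma)$-orthogonality. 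Absorbing the two extra directions into the suppressed subspace is exactly the Cauchy/Poincar\'e separation consequence of Courant--Fisher \ref{v}: the eigenvalues $\theta_1\le\cdots\le\theta_{m-2}$ of the common restriction satisfy $\beta_p\le\theta_p$ and $\theta_p\le\alpha_{p+2}$, whence $\beta_p\le\alpha_{p+2}$ (for $p\in\{m-1,m\}$ the claim is the global bound $\beta_p\le 2$). This half is clean.

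The hard part will be the lower bound $\alpha_p\le\beta_p$. The tempting route, in the spirit of Theorem \ref{theorem 3.2}, is monotonicity: passing from $R_\Gamma$ to $R_{\Gamma'}$ the numerator only grows by the positive semidefinite rank-one term $(\gamma_1-\gamma_2)^2$, while the denominator only shrinks by the positive semidefinite term $\gamma_1^2+\gamma_2^2$, so one expects the quotient to rise and hence $\beta_p\ge\alpha_p$. The obstruction is that $\Bar N$ is indefinite (its spectrum lies in $[-2,2]$ and typically contains negative values), so shrinking the denominator does \emph{not} uniformly increase $R$: writing $a=\gamma^{T}N(\Gamma)\gamma$, $b=\gamma^{T}D(\Gamma)\gamma$, $s=(\gamma_1-\gamma_2)^2$, $t=\gamma_1^2+\gamma_2^2$, one finds $R_{\Gamma'}-R_\Gamma=\dfrac{sb+at}{b(b-t)}$, whose numerator $sb+at$ is controlled only when $a\ge0$. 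Thus the monotonicity fails on the part of the space where the form is negative, and the same codimension-two restriction argument as above only yields the symmetric companion $\alpha_p\le\beta_{p+2}$ rather than $\alpha_p\le\beta_p$. I would therefore concentrate effort here, attempting to confine the max-min optimizers to the region $a\ge0$ or to use the refined estimate $sb+at\ge0$; given how delicate this is, I would first stress-test the inequality $\alpha_p\le\beta_p$ on small unbalanced examples (e.g.\ an all-negative triangle versus the negative path obtained by deleting one edge) before committing to a proof of that direction.
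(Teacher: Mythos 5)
Your setup and your proof of the upper bound $\beta_p\leqslant\alpha_{p+2}$ coincide in substance with the paper's: the paper likewise rewrites $\beta_p$ as a min--max of the quotient whose numerator gains $(\gamma_1-\gamma_2)^2$ and whose denominator loses $\gamma_1^2+\gamma_2^2$, and obtains $\alpha_{p+2}$ by imposing $\gamma\perp e_1,e_2$. The genuine problem is the lower bound, exactly where you stopped. The paper dismisses it with ``similar to Theorem \ref{theorem 3.2}, the lower bound $\alpha_p$ follows,'' but in Theorem \ref{theorem 3.2} the denominator $\gamma^T\gamma$ is untouched by the edge deletion, so adding the nonnegative term $(\gamma_1-\gamma_2)^2$ to the numerator genuinely increases the Rayleigh quotient; here the denominator also shrinks, and since $\gamma^TN(\Gamma)\gamma$ can be negative the quotient need not increase --- precisely your computation $R_{\Gamma'}-R_\Gamma=(sb+at)/\bigl(b(b-t)\bigr)$ with $a$ possibly negative. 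That objection is not merely a gap in the write-up: the stress test you proposed refutes the inequality. For the all-negative triangle one has $N(\Gamma)=J-3I$ and $D(\Gamma)=2I$, so $\Bar{N}(\Gamma)$ has ordered spectrum $(-\tfrac32,-\tfrac32,0)$; deleting one (negative) edge leaves the all-negative path on three vertices, whose normalized net-Laplacian
\[
\Bar{N}(\Gamma')=\begin{pmatrix}-1&0&\tfrac{1}{\sqrt2}\\[2pt]0&-1&\tfrac{1}{\sqrt2}\\[2pt]\tfrac{1}{\sqrt2}&\tfrac{1}{\sqrt2}&-1\end{pmatrix}
\]
has ordered spectrum $(-2,-1,0)$. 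Thus $\alpha_1=-\tfrac32>\beta_1=-2$ and $\alpha_p\leqslant\beta_p$ fails.

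What your codimension-two argument does prove is the symmetric statement $\alpha_{p-2}\leqslant\beta_p\leqslant\alpha_{p+2}$: on $U=\{\gamma:\gamma_1=\gamma_2=0\}$ the two pencils $(N(\Gamma),D(\Gamma))$ and $(N(\Gamma'),D(\Gamma'))$ restrict to the same problem, and Cauchy interlacing for each pencil against the common restricted eigenvalues $\theta_1\leqslant\cdots\leqslant\theta_{m-2}$ gives $\alpha_p\leqslant\theta_p\leqslant\alpha_{p+2}$ and $\beta_p\leqslant\theta_p\leqslant\beta_{p+2}$, hence $\beta_p\leqslant\alpha_{p+2}$ and $\alpha_p\leqslant\beta_{p+2}$ (consistent with the example: $\theta_1=-1$ sits between $\alpha_1=-\tfrac32$ and $\alpha_3=0$, and between $\beta_1=-2$ and $\beta_3=0$). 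So you should not try to force the monotonicity through the region where the form is negative; record the counterexample and state the corrected two-sided interlacing instead.
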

\begin{proof}
    Taking $v=u_1$ and $u=u_2$ and proceeding as in Theorem \ref{theorem 3.2}, the eigenvalues $\beta_p$ of $\Bar{N}(\Gamma')$ can be expressed as
    \[
    \beta_p=\min_{W_{m-p}} ~\max\limits_{\substack{\gamma (\ne 0) \in \mathbb{R}^{m}}\atop{\gamma\perp W_{m-p}}}~\frac{\sum\limits_{\substack{\Gamma(u_i \sim u_j)}}(\gamma_i-\sigma(u_iu_j)\gamma_j)^2-2\sum\limits_{\substack{u_i\in V(\Gamma)}}d_i^-\gamma_i^2+(\gamma_1-\gamma_2)^2}{\sum\limits_{\substack{\Gamma:i=1}}^{m}d_i\gamma_i^2-\gamma_1^2-\gamma_2^2}
    \]
    Similar to Theorem \ref{theorem 3.2}, the lower bound $\alpha_p$ follows and the upper bound $\alpha_{p+2}$ follows taking $\gamma \perp e_1,e_2$ in the max-min statement of Courant-Fisher Theorem \ref{v}.
\end{proof}
\begin{therm}\textnormal{Let $\Gamma$ be a signed graph without isolated vertices of order $m$ and let $\Gamma'=\Gamma-e$ be the signed graph obtained by removing an edge $e=uv$ such that $\sigma(e)=+$. If $(\alpha_1,\alpha_2,\cdots,\alpha_m)$ and $(\beta_1,\beta_2,\cdots,\beta_m)$ are the ordered spectrum of $\Bar{N}(\Gamma)$ and $\Bar{N}(\Gamma')$ respectively, then}
    \[
    \alpha_{p-1}\leqslant \beta_p\leqslant \alpha_{p+1}~ ; ~\text{ for } p=1,2,\cdots,m
    \]
\end{therm}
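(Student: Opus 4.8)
The plan is to follow the Courant--Fisher method of Theorem \ref{theorem 3.2}, working with the generalized Rayleigh quotient that computes the eigenvalues of $\Bar{N}(\Gamma')$. Put $v=u_1$ and $u=u_2$. Removing the positive edge $u_1u_2$ deletes the single summand $(\gamma_1-\gamma_2)^2$ from the net-Laplacian form and lowers the degrees of $u_1,u_2$ by one, so exactly as in Theorem \ref{theorem 3.2} and the preceding theorem (only the sign of the rank-one term changes, since $\sigma(e)=+$) one obtains
\[
\beta_p=\min_{W_{m-p}}\ \max_{\substack{\gamma(\ne0)\in\mathbb{R}^m\\ \gamma\perp W_{m-p}}}\ \frac{\displaystyle\sum_{\Gamma(u_i\sim u_j)}(\gamma_i-\sigma(u_iu_j)\gamma_j)^2-2\sum_{u_i\in V(\Gamma)}d_i^-\gamma_i^2-(\gamma_1-\gamma_2)^2}{\displaystyle\sum_{i=1}^m d_i\gamma_i^2-\gamma_1^2-\gamma_2^2}.
\]
I will write $Q(\gamma)=\gamma^{T}N(\Gamma)\gamma$ for the numerator of the unperturbed quotient and $R(\gamma)=\sum_i d_i\gamma_i^2$ for its denominator, so that $\alpha_p=\min_{W}\max_{\gamma\perp W}Q(\gamma)/R(\gamma)$, and I will use throughout the a priori bounds $-2\leqslant\alpha_p\leqslant 2$ derived at the beginning of Section~\ref{sec4}.

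For the upper bound $\beta_p\leqslant\alpha_{p+1}$ I would argue as for the upper bound of the preceding theorem, but spending only one extra constraint. Let $U$ be a $(p+1)$-dimensional subspace realizing the min-max value of $\alpha_{p+1}$, so that $Q(\gamma)\leqslant\alpha_{p+1}R(\gamma)$ for every $\gamma\in U$, and restrict to $\tilde U=U\cap\{\gamma:\gamma\perp(e_1+e_2)\}$, whose dimension is at least $p$. On $\tilde U$ we have $\gamma_2=-\gamma_1$, hence $(\gamma_1-\gamma_2)^2=4\gamma_1^2=2(\gamma_1^2+\gamma_2^2)$, and the perturbed quotient equals $\big(Q(\gamma)-4\gamma_1^2\big)/\big(R(\gamma)-2\gamma_1^2\big)$. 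The inequality $\leqslant\alpha_{p+1}$ then reduces to $Q(\gamma)-\alpha_{p+1}R(\gamma)\leqslant 2(2-\alpha_{p+1})\gamma_1^2$, whose left side is $\leqslant 0$ by the choice of $U$ and whose right side is $\geqslant 0$ because $\alpha_{p+1}\leqslant 2$. Passing to a $p$-dimensional subspace of $\tilde U$ yields $\beta_p\leqslant\alpha_{p+1}$. The point of the pairing is that the single hyperplane $\gamma_1+\gamma_2=0$ forces the numerator deficit to be exactly twice the denominator deficit, which is precisely the slope governed by the bound $\alpha_{p+1}\leqslant 2$.

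For the lower bound $\beta_p\geqslant\alpha_{p-1}$ I would dualize: take the max-min form of $\beta_p$ and an $(m-p+2)$-dimensional subspace $V$ with $Q(\gamma)\geqslant\alpha_{p-1}R(\gamma)$ on $V$, then restrict to $\tilde V=V\cap\{\gamma:\gamma\perp(e_1-e_2)\}$ of dimension at least $m-p+1$, where $\gamma_1=\gamma_2$ annihilates the numerator perturbation and leaves the quotient $Q(\gamma)/\big(R(\gamma)-2\gamma_1^2\big)$. When $\alpha_{p-1}\geqslant 0$ this is immediate, since $Q(\gamma)\geqslant\alpha_{p-1}R(\gamma)\geqslant 0$ and shrinking a positive denominator only raises the quotient. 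I expect the case $\alpha_{p-1}<0$ to be the main obstacle: there the residual form $\Phi(\gamma)=(\gamma_1-\gamma_2)^2-\alpha_{p-1}(\gamma_1^2+\gamma_2^2)$ is positive definite in $(\gamma_1,\gamma_2)$, so no single linear constraint can force $\Phi\leqslant 0$, the clean one-hyperplane trick of the upper bound has no exact mirror, and shrinking the denominator now works against a possibly negative numerator. This is exactly where the lower spectral bound $\alpha_{p-1}\geqslant-2$ must enter, and the delicate issue is to recover the sharp index $p-1$ rather than the index $p-2$ that the crude two-constraint reduction $\gamma\perp e_1,\ \gamma\perp e_2$ would yield. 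I would attempt to resolve it by a sign-based case analysis of $Q(\gamma)$ on $\tilde V$, combined with the global estimate $Q(\gamma)\geqslant-2R(\gamma)$ to control the vectors with $(\gamma_1,\gamma_2)\ne(0,0)$.

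Finally, I would record one technical caveat before running any of the above. If $u$ or $v$ is a pendant vertex then $\Gamma'$ acquires an isolated vertex, the corresponding diagonal entry of $D(\Gamma')^{-\frac12}$ is set to zero, and the denominator $R(\gamma)-\gamma_1^2-\gamma_2^2$ degenerates on that coordinate. Since an isolated vertex contributes only a decoupled zero eigenvalue to $\Bar{N}(\Gamma')$, I would dispose of this case separately and read the interlacing off the complementary support, so that all Rayleigh quotients above have strictly positive denominators.
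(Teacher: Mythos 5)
Your route is the same as the paper's: the same perturbed Rayleigh quotient for $\beta_p$ (numerator loses $(\gamma_1-\gamma_2)^2$, denominator loses $\gamma_1^2+\gamma_2^2$), the constraint $\gamma_1=-\gamma_2$ for the upper bound and $\gamma_1=\gamma_2$ for the lower bound. Your upper-bound computation is complete and correct, and considerably more explicit than the paper's one-line appeal to Theorem~8 of \cite{ref1}; your closing caveat about a pendant edge leaving an isolated vertex in $\Gamma'$ is also a legitimate point that the paper does not address.

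The lower bound is where you stop, and you should know that the paper does not get further than you do. Its entire argument there is ``taking $\gamma_1=\gamma_2$ gives the lower bound $\alpha_{p-1}$, similar to Theorem~8 in \cite{ref1}.'' As you observe, with $\gamma_1=\gamma_2$ the quotient becomes $Q(\gamma)/(R(\gamma)-t)$ with $t=\gamma_1^2+\gamma_2^2\geqslant0$, and $Q\geqslant\alpha_{p-1}R$ implies $Q\geqslant\alpha_{p-1}(R-t)$ only when $\alpha_{p-1}\geqslant0$. The cited Theorem~8 of \cite{ref1} concerns the ordinary normalized Laplacian of a signed graph, which is positive semidefinite, so the problematic case never arises there; the normalized net-Laplacian of Section~\ref{sec4} has spectrum in $[-2,2]$, and the transfer is exactly as delicate as you say. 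Your proposed repair via the global bound $Q\geqslant-2R$ does not obviously close it either: the convex combination of $Q\geqslant\alpha_{p-1}R$ and $Q\geqslant-2(R-t)$ that produces the correct coefficient of $t$ degrades the coefficient of $R$ from $\alpha_{p-1}$ to $\alpha_{p-1}(1+\lambda)$ with $\lambda>0$, which is the wrong direction when $\alpha_{p-1}<0$; and the two-constraint reduction $\gamma_1=\gamma_2=0$ only yields index $p-2$. It is worth noting that applying the paper's preceding theorem (negative-edge removal) to the sign-reversed graph $-\Gamma$, for which $\Bar{N}(-\Gamma)=-\Bar{N}(\Gamma)$ since negating all signs negates both $A$ and $D^{\pm}$ while fixing $D$, gives precisely $\alpha_{p-2}\leqslant\beta_p\leqslant\alpha_p$, i.e.\ the weaker lower index. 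So you have not missed an idea that is present in the paper; you have isolated a step the paper asserts without proof, and closing it for $\alpha_{p-1}<0$ requires an argument that neither your proposal nor the paper supplies.
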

\begin{proof}
    Taking $v=u_1$ and $u=u_2$ and proceeding as in Theorem \ref{theorem 3.2}, the eigenvalues $\beta_p$ of $\Bar{N}(\Gamma')$ can be expressed as
    \[
    \beta_p=\min_{W_{m-p}} ~\max\limits_{\substack{\gamma (\ne 0) \in \mathbb{R}^{m}}\atop{\gamma\perp W_{m-p}}}~\frac{\sum\limits_{\substack{\Gamma(u_i \sim u_j)}}(\gamma_i-\sigma(u_iu_j)\gamma_j)^2-2\sum\limits_{\substack{u_i\in V(\Gamma)}}d_i^-\gamma_i^2-(\gamma_1-\gamma_2)^2}{\sum\limits_{\substack{\Gamma:i=1}}^{m}d_i\gamma_i^2-\gamma_1^2-\gamma_2^2}
    \]
    Similar to Theorem 8 in \cite{ref1}, taking $\gamma_1=\gamma_2$ gives the lower bound $\alpha_{p-1}$ and the upper bound $\alpha_{p+1}$ follows taking $\gamma_1=-\gamma_2$ in the max-min statement of Courant-Fisher Theorem \ref{v}.
\end{proof}
\subsection{Vertex contraction}
Consider a signed graph $\Gamma$, and let $t \in V(\Gamma)$. The open neighborhood of the vertex $t$ is defined as the set of vertices adjacent to $t$ in $\Gamma$ that is
\[
\mathcal{N}(t)=\{w \in V(\Gamma);~~ tw \in E(\Gamma)\}
\]
and $\mathcal{N}[t]=\mathcal{N}(t)\cup \{t\}$ is a closed neighborhood of $t$ in $\Gamma$.\\

For a signed graph $\Gamma=(G,\sigma)$ with vertices $a$ and $b$ such that $\sigma(ta)=\sigma(tb)$ for all $t \in \mathcal{N}(a) \cap \mathcal{N}(b)$, Atay and Tuncel in~\cite{ref1} defined an allowable contraction $\Gamma/\{a,b\}$ on $a$ and $b$, as a signed graph obtained from $\Gamma$ by deleting the vertices $a$ and $b$ and adding a new vertex $(ab)$ such that $\mathcal{N}(ab)=\mathcal{N}(a) \cup \mathcal{N}(b)$ and
\[
\sigma(t(ab))=\begin{cases}
    \sigma(ta) & \text{ if } t\in \mathcal{N}(a) \text{ and } t \notin \mathcal{N}(b)\\
    \sigma(tb) & \text{ if } t\in \mathcal{N}(b) \text{ and } t \notin \mathcal{N}(a)\\
    \sigma(ta)=\sigma(tb) & \text{ if } t \in \mathcal{N}(a) \cap \mathcal{N}(b)
\end{cases}
\]
In particular, if $\mathcal{N}(a) \cup \mathcal{N}[b]=\emptyset$ then $\Gamma/\{a,b\}$ is an allowable contraction.
\begin{therm}
\textnormal{Let $\Gamma$ be a signed graph and $a,b \in V(\Gamma)$ such that $\mathcal{N}(a) \cap\mathcal{N}(b)=\emptyset$. If $(\alpha_1,\alpha_2,\cdots,\alpha_{m+1})$ and $(\beta_1,\beta_2,\cdots,\beta_m)$ are the ordered spectrum of $\Bar{N}(\Gamma)$ and $\Bar{N}(\Gamma')$ respectively where $\Gamma'=\Gamma/\{a,b\}$, then for $p=1,2,\cdots,m$
\[
\alpha_{p-1}\leqslant\beta_p\leqslant\alpha_{p+1}
\]
with the convention that $\alpha_0=-2$}
\end{therm}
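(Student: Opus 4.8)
The plan is to realise the contraction $\Gamma'=\Gamma/\{a,b\}$ as a single linear constraint on the test vectors for $\bar N(\Gamma)$, exactly as edge deletion was realised in Theorem~\ref{theorem 3.2}. Relabel so that $a=u_1$ and $b=u_2$, and let the first coordinate of a vector $\gamma'\in\mathbb{R}^m$ on $\Gamma'$ belong to the contracted vertex $(ab)$. Introduce the lift $\iota:\mathbb{R}^m\to\mathbb{R}^{m+1}$ that copies this coordinate into positions $1$ and $2$ and leaves the remaining coordinates fixed, so that $\operatorname{im}\iota=\{\gamma\in\mathbb{R}^{m+1}:\gamma_1=\gamma_2\}=(e_1-e_2)^\perp$. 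First I would take the generalised Rayleigh quotient $\dfrac{\gamma^{T}N(\Gamma)\gamma}{\gamma^{T}D(\Gamma)\gamma}$ recorded just before this theorem and evaluate the corresponding quotient for $\bar N(\Gamma')$ at a lifted vector $\iota\gamma'$.

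The computational heart is to compare this evaluation with the data of $\Gamma$. Because $\mathcal N(a)\cap\mathcal N(b)=\emptyset$, every vertex other than $a,b$ adjacent to $(ab)$ is adjacent to exactly one of $a,b$ and inherits the same sign; hence, away from the pair $\{a,b\}$, both the edge sum $\sum(\gamma_i-\sigma(u_iu_j)\gamma_j)^2$ and the net-degree sum $-2\sum d_i^-\gamma_i^2$ for $\Gamma'$ at $\iota\gamma'$ reproduce those of $\Gamma$ term by term. What remains is the pair itself: the contracted degree is $d_{(ab)}=d_a+d_b$ if $a\not\sim b$ and $d_a+d_b-2$ if $a\sim b$, while the edge $ab$ (present only in the latter case) drops out of $\Gamma'$. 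I therefore expect the Rayleigh quotient of $\bar N(\Gamma')$ at $\gamma'$ to equal
\[
\frac{(\iota\gamma')^{T}N(\Gamma)(\iota\gamma')}{(\iota\gamma')^{T}D(\Gamma)(\iota\gamma')-\varepsilon(\gamma_1^2+\gamma_2^2)},\qquad \varepsilon=\begin{cases}1,&ab\in E(\Gamma),\\0,&\text{otherwise.}\end{cases}
\]
When $a\sim b$ with $\sigma(ab)=-$ the absent edge would have contributed $-4\gamma_1^2$ to the numerator and its two negative incidences $+4\gamma_1^2$ through the $-2d_i^-$ terms, so these cancel and the numerator stays exactly $(\iota\gamma')^{T}N(\Gamma)(\iota\gamma')$; confirming this cancellation is the one spot where a sign slip could hide.

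With this identity in hand I would feed it into the Courant--Fisher characterisation~\ref{v}. The constraint $\gamma=\iota\gamma'$ is the single orthogonality condition $\gamma\perp e_1-e_2$, so in the non-adjacent case $\varepsilon=0$ the eigenvalues $\beta_p$ are precisely those of the compression of $\bar N(\Gamma)$ to a codimension-one subspace, and Cauchy interlacing gives even $\alpha_p\leqslant\beta_p\leqslant\alpha_{p+1}$, which already implies the stated inequalities. For $\varepsilon=1$ the denominator correction $-(\gamma_1^2+\gamma_2^2)$ acts on $\operatorname{im}\iota$ as the rank-one positive semidefinite form $\tfrac12(e_1+e_2)(e_1+e_2)^{T}$, and I would measure its effect through the inertia count $N_\bullet(\lambda)=n_-\!\big(N(\Gamma)-\lambda D(\Gamma)\big)$, whose jumps locate the $\alpha_p$ and $\beta_p$. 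Restricting to $(e_1-e_2)^\perp$ moves $n_-$ by at most one, and since $\lambda$ times the correction is semidefinite of sign $\operatorname{sgn}\lambda$, the rank-one modification moves $n_-$ by at most one further; the target $\alpha_{p-1}\leqslant\beta_p\leqslant\alpha_{p+1}$ is the assertion that these two moves never accumulate to more than a single net index. The convention $\alpha_0=-2$ is exactly the global bound $\beta_p\geqslant-2$ established before the theorem, so the case $p=1$ of the lower inequality is automatic.

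I expect the adjacent case $\varepsilon=1$, and within it the upper bound $\beta_p\leqslant\alpha_{p+1}$, to be the real obstacle. The restriction direction $e_1-e_2$ and the correction direction $e_1+e_2$ are orthogonal, so the two rank-one moves are independent and a naive inertia count only yields $|N_\beta(\lambda)-N_\alpha(\lambda)|\leqslant 2$, i.e.\ the weaker spread $\alpha_{p-2}\leqslant\beta_p\leqslant\alpha_{p+2}$. The sign analysis rescues the lower bound $\alpha_{p-1}\leqslant\beta_p$ cleanly for every $\lambda$ (for $\lambda>0$ both moves lower $n_-$, for $\lambda<0$ the total change is at most $+1$), but for $\lambda>0$ both moves lower $n_-$ simultaneously, and it is here that a second index can be lost on the upper side. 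I would try to recover the sharp $\beta_p\leqslant\alpha_{p+1}$ by a simultaneous-congruence argument that diagonalises $D(\Gamma)$ together with the correction before restricting, so that the two perturbations are handled in one step rather than two; failing that, confining the statement to the allowable-contraction regime $\mathcal N(a)\cap\mathcal N[b]=\emptyset$ of the preceding remark forces $a\not\sim b$, hence $\varepsilon=0$, and the clean Cauchy bound applies.
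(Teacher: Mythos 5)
Your core argument is the same one the paper uses, just packaged more carefully: the paper realises $\Gamma/\{a,b\}$ by rerouting every edge $u_1u_k$ to $u_2u_k$ with the same sign, writes $\beta_p$ via Courant--Fisher with the correction term $\sum_{k\in K}\bigl[(\gamma_2-\sigma(u_2u_k)\gamma_k)^2-(\gamma_1-\sigma(u_1u_k)\gamma_k)^2\bigr]$, and then ``takes $\gamma_1=\gamma_2$'' in both the min--max and max--min forms; your lift $\iota$ onto $(e_1-e_2)^{\perp}$ is exactly that constraint. Where you go further is in observing that, when $a\not\sim b$, the constrained generalised Rayleigh quotient of $(N(\Gamma),D(\Gamma))$ reproduces that of $(N(\Gamma'),D(\Gamma'))$ with \emph{no} leftover correction, so the $\beta_p$ are literally the eigenvalues of a codimension-one compression of $\Bar{N}(\Gamma)$ and Cauchy interlacing gives the sharper $\alpha_p\leqslant\beta_p\leqslant\alpha_{p+1}$. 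The paper instead spends one constraint on each side of Courant--Fisher and only claims $\alpha_{p-1}\leqslant\beta_p\leqslant\alpha_{p+1}$; your version both proves the stated inequality and strengthens it in this case, and your observation that $\alpha_0=-2$ is just the global lower bound $\beta_p\geqslant-2$ matches the paper's setup.

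The one place you stop short --- the adjacent case $ab\in E(\Gamma)$, where the contracted degree drops to $d_a+d_b-2$ and the denominator acquires the $-(\gamma_1^2+\gamma_2^2)$ correction --- is a real issue for the theorem as stated (the hypothesis $\mathcal{N}(a)\cap\mathcal{N}(b)=\emptyset$ does not exclude $a\sim b$), but it is not a defect of your proposal relative to the paper: the paper's proof describes the contraction purely as rerouting $u_1u_k$ to $u_2u_k$, which presupposes $a\not\sim b$ (otherwise the edge $ab$ becomes a loop), and its displayed denominator $\sum_i d_i\gamma_i^2-\gamma_1^2-\gamma_2^2$ is not justified for either case. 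So the paper silently proves only the non-adjacent case, which is the allowable-contraction regime $\mathcal{N}(a)\cap\mathcal{N}[b]=\emptyset$ you fall back to. Your inertia analysis of the rank-one denominator perturbation is a reasonable route to the adjacent case, but as you note it only yields a spread of two indices on the upper side; if you want to close that case you would need the simultaneous-congruence argument you sketch, and you should not expect to find it in the paper.
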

\begin{proof}
    Let $a=u_1$, $b=u_2$ and $k$ denote an index set such that $k\in K$ if and only if $u_k \in N(u_1)$. As, $\mathcal{N}(u_1)\cap \mathcal{N}(u_2)=\emptyset$, $\Gamma/\{a,b\}$ can be obtained from $\Gamma$ by removing edges $u_1u_k$ and adding edges $u_2u_k$ such that $\sigma(u_2u_k)=\sigma(u_1u_k)$ for all $k\in K$. Using Courant-Fisher Theorem \ref{v}, the eigenvalues $\beta_p$ of $\mathcal{N}(\Gamma')$ is given by
    \begin{equation*}
        \begin{split}
            \beta_p&=\min_{W_{m-p}} ~\max\limits_{\substack{\gamma \ne 0 }\atop{\gamma\perp W_{m-p}}}~\frac{1}{\sum\limits_{\substack{u_i\in V(\Gamma)}}d_i\gamma_i^2-\gamma_1^2-\gamma_2^2}\Bigg[\sum\limits_{\substack{\Gamma(u_i \sim u_j)}}(\gamma_i-\sigma(u_iu_j)\gamma_j)^2+\sum\limits_{\substack{k\in K}}\Big[(\gamma_2-\sigma(u_2u_k)\gamma_k)^2\\&~~~~~~~~~~~~~~~~~~~~~~~~~~~~~~~~~~~~~~~~~~~~~-(\gamma_1-\sigma(u_1u_k)\gamma_k)^2\Big]-2\Big(\sum\limits_{\substack{u_i\in V(\Gamma)}}d_i^-\gamma_i^2-d_1^-\gamma_1^2+d_1^-\gamma_2^2\Big) \Bigg]
        \end{split}
    \end{equation*}
    Similar to Theorem 2.7 in~\cite{ref3}, taking $\gamma_1=\gamma_2$ on both min-max and max-min statement of the Courant-Fisher Theorem \ref{v} gives the lower bound $\alpha_{p-1}$ and upper bound $\alpha_{p+1}$ respectively.
\end{proof}
 \section*{Acknowledgements}
We would like to acknowledge National Institute of Technology Sikkim for giving doctoral fellowship to Satyam Guragain.


\begin{thebibliography}{}
   \bibitem{ref1} Fatihcan M Atay and Hande Tuncel. On the spectrum of the normalized laplacian for
signed graphs: Interlacing, contraction, and replication. Linear Algebra and its Applica-
tions, 442:165–177, 2014.
\bibitem{ref2} Yousef Bagheri, Ali Reza Moghaddamfar, and Farzaneh Ramezani. The number of switch-
ing isomorphism classes of signed graphs associated with particular graphs. Discrete Ap-
plied Mathematics, 279:25–33, 2020.
\bibitem{ref3} Guantao Chen, George Davis, Frank Hall, Zhongshan Li, Kinnari Patel, and Michael
Stewart. An interlacing result on normalized laplacians. SIAM Journal on Discrete Math-
ematics, 18(2):353–361, 2004.
\bibitem{ref4} Fan RK Chung. Spectral graph theory, volume 92. American Mathematical Soc., 1997.
\bibitem{ref5} Robert Grone, Russell Merris, and Vaikalathur S Sunder. The laplacian spectrum of a
graph. SIAM Journal on matrix analysis and applications, 11(2):218–238, 1990.
\bibitem{ref6} Frank Harary. On the notion of balance of a signed graph. Michigan Mathematical Journal,
2(2):143–146, 1953.
\bibitem{ref7} Roger A Horn and Charles R Johnson. Matrix analysis. Cambridge university press, 2012.
\bibitem{ref8} Yaoping Hou, Jiongsheng Li, and Yongliang Pan. On the laplacian eigenvalues of signed
graphs. Linear and Multilinear Algebra, 51(1):21–30, 2003.
\bibitem{ref9} Zvi Lotker. Note on deleting a vertex and weak interlacing of the laplacian spectrum. The
Electronic Journal of Linear Algebra, 16:68–72, 2007.
\bibitem{ref10} Nutan G Nayak. On net-regular signed graphs. Infinite Study, 2016.
\bibitem{ref11} K Shahul Hameed, Viji Paul, and KA Germina. On co-regular signed graphs. Australas J
Combin, 62(1):8–17, 2015.
\bibitem{ref12} Baofeng Wu, Jiayu Shao, and Xiying Yuan. Deleting vertices and interlacing laplacian
eigenvalues. Chinese Annals of Mathematics, Series B, 31(2):231–236, 2010.
14

\end{thebibliography}
\end{document}